\newcommand{\stkout}[1]{\ifmmode\text{\sout{\ensuremath{#1}}}\else\sout{#1}\fi}
\tikzstyle{vertex}=[circle,draw=black,fill=black,inner sep=0,minimum size=5pt,text=white,font=\footnotesize]
\renewenvironment{proof}[1][\proofname] {\par\pushQED{\qed}\normalfont\topsep6\p@\@plus6\p@\relax\trivlist\item[\hskip\labelsep\bfseries#1\@addpunct{.}]\ignorespaces}{\popQED\endtrivlist\@endpefalse}
\newtheorem{theorem}{\bf Theorem}[section]
\newtheorem{lemma}[theorem]{\bf Lemma}
\newtheorem{claim}[theorem]{\bf Claim}
\newtheorem{question}[theorem]{\bf Question}
\newtheorem{problem}[theorem]{\bf Problem}
\theoremstyle{definition}
\newcommand\claimproofend{\renewcommand{\qedsymbol}{$\boxdot$}
\end{proof}
\renewcommand{\qedsymbol}{$\square$}}
\def\eps{\varepsilon}
\def\cA{\mathcal{A}}
\def\cC{\mathcal{C}}
\def\cF{\mathcal{F}}
\title{\vspace{-0.9cm}Universality for transversal Hamilton cycles in random graphs}
\author{Micha Christoph\thanks{Department of Mathematics, ETH Z\"{u}rich, Switzerland.
 E-mail: {\tt micha.christoph@math.ethz.ch}.
 Supported by the SNSF Ambizione Grant No. 216071.
}\and 
Anders Martinsson\thanks{Department of Computer Science, ETH Z\"{u}rich, Switzerland.
 E-mail: {\tt anders.martinsson@inf.ethz.ch}}\and
Aleksa Milojevi\'{c}\thanks{Department of Mathematics, ETH Z\"urich, Switzerland. E-mail: {\tt aleksa.milojevic@math.ethz.ch}. Supported by SNSF Grant 200021-228014.}
}
\date{}
\begin{document}

\maketitle
\begin{abstract}
    A tuple $(G_1,\dots,G_n)$ of graphs on the same vertex set of size $n$ is said to be Hamilton-universal if for every map $\chi: [n]\to[n]$ there exists a Hamilton cycle whose $i$-th edge comes from $G_{\chi(i)}$. Bowtell, Morris, Pehova and Staden proved an analog of Dirac's theorem in this setting, namely that if $\delta(G_i)\geq (1/2+o(1))n$ then $(G_1,\dots,G_n)$ is Hamilton-universal. Combining McDiarmid's coupling and a colorful version of the Friedman-Pippenger tree embedding technique, we establish a similar result in the setting of sparse random graphs, showing that there exists $C$ such that if the $G_i$ are independent random graphs sampled from $G(n,p)$, where $p\geq C\log n/n$, then $(G_1,\dots,G_n)$ is Hamilton-universal with high probability.
\end{abstract}

\section{Introduction}

One of the most fundamental questions in graph theory is that of determining whether a given graph contains a Hamilton cycle, i.e. a cycle which contains all vertices of the graph. There is no known efficient algorithm to check whether a graph contains such a cycle -- the problem of finding a Hamilton cycle in a given graph is famously NP-complete. Therefore, researchers focused on finding easy-to-check conditions which ensure that a graph contains such a cycle, and perhaps even allow for finding the cycle in polynomial time. The most famous condition of the kind is Dirac's condition~\cite{D52}, dating back to 1952, which states that every $n$-vertex graph of minimum degree at least $n/2$ must contain a Hamilton cycle. 

Although Dirac's condition is undoubtedly elegant, and tight in the sense that the degree 
requirement $n/2$ cannot be lowered, it is quite restrictive -- for example, it can only be satisfied by dense graphs which have quadratically many edges. It is, therefore, natural to ask what kind of conditions guarantee the existence of Hamilton cycles in sparser graphs. A common family of sparse graphs to study are the random graphs.

The study of Hamilton cycles in random graphs is almost as old as the study of random graphs -- in their original paper from 1960 \cite{ER60}, Erd\H{o}s and R\'enyi asked for the number of edges at which one expects to start seeing a Hamilton path in a random graph $G(n, m)$. This question was answered by P\'osa \cite{PO76}, who showed that one can expect Hamilton cycles in random graphs with $\Omega(n\log n)$ edges, by introducing a very useful and elegant technique now known as the P\'osa rotation. His argument was later tightened by Koml\'os and Szemer\'edi \cite{KS83}, and Bollob\'as \cite{B84} and Ajtai, Koml{\'o}s and Szemer{\'e}di \cite{AKS85}, to ultimately attain very good understanding of Hamilton cycles in random graphs. Namely, Bollob\'as, as well as Ajtai, Koml{\'o}s and Szemer{\'e}di, showed that if we order the edges of $K_n$ randomly and add them one by one to our graph, the time at which the graph becomes Hamiltonian with high probability coincides with the first time at which no vertex has degree less than 2, which is clearly a necessary condition for the graph to be Hamiltonian. For a detailed story of Hamilton cycles in random graphs, see the survey of Frieze \cite{friezesurvey}.

In recent years, the study of Hamilton cycles and other spanning structures in graphs has taken an interesting turn, by considering the colorful variants of classical questions from extremal graph theory. In particular, the setup we will be studying was introduced by Joos and Kim in \cite{JK20} while answering a question of Aharoni \cite{A20}, and it goes as follows. Given a collection of $m$ graphs $(G_1, \dots, G_m)$ on the same vertex set $V$, which we think of as colors of the edges (where an edge may receive an arbitrary number of colors), and a graph $H$ with vertices in $V$, we say that $(G_1, \dots, G_m)$ contains a \textit{rainbow (or transversal) copy} of $H$ if there exists an injective function $\psi:E(H)\to [m]$ such that the edge $e$ is contained in the graph $G_{\psi(e)}$ for all $e\in E(H)$. In other words, the graph $H$ can be constructed by picking at most one edge from each of the graphs $G_i$. Joos and Kim showed that if each of the graphs $G_i$ is an $n$-vertex graph with minimum degree $n/2$ and if $m\geq n$, then there exists a transversal Hamilton cycle in the collection $(G_1, \dots, G_m)$. 

This result inspired a large body of work on the subject, for example studying the existence of transversal cliques \cite{A20} and transversal $F$-factors \cite{Alp21}. We recommend the survey of Sun, Wang and Wei \cite{Survey24} to the interested reader who would like to learn more about the transversal subgraphs of graph collections. In this paper, we will consider the question of universality for Hamilton cycles in collections of random graphs.

The notion of universality was introduced by Bowtell, Morris, Pehova and Staden in \cite{BP24}. We say that a collection of graphs $(G_1, \dots, G_n)$ on the same vertex set of size $n$ is \textit{Hamilton-universal} if for each map $\chi:[n]\to[n]$, which we call the \textit{color-pattern}, there exists a Hamilton cycle whose $i$-th edge lies in $G_{\chi(i)}$. The authors of \cite{BP24} showed that every collection $(G_1, \dots, G_n)$ of $n$ graphs on the same vertex set of size $n$ satisfying $\delta(G_i)\geq (1/2+o(1))n$ is Hamilton-universal. Note that the difference from the result of Joos and Kim is that the edges of the Hamilton cycle must now come from a prescribed member of the collection $(G_1, \dots, G_n)$, rather than an arbitrary one. Extending \cite{BP24}, Heath, Hyde, Morrison and Ogden \cite{heath2025universality} recently studied universality of powers of Hamilton cycles.

The aim of this paper is to extend the result of Bowtell, Morris, Pehova and Staden to the setting of random graphs, answering a question of Pehova from the $30$th British Combinatorial Conference.

\begin{theorem}\label{thm:main}
There exists $C$ such that for $p\geq C\log n/n$ the following holds with high probability. Let $(G_1, \ldots, G_n)$ be a tuple of independent random graphs on the same vertex set sampled from $G(n,p)$. Then, $(G_1, \ldots, G_n)$ is Hamilton-universal. 
\end{theorem}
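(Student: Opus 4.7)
The plan is to combine the Friedman--Pippenger tree embedding technique with the absorbing and rotation-extension methods for Hamiltonicity in random graphs, lifted to a colorful setting respecting the pattern $\chi$, and to use McDiarmid's coupling to transfer arguments from cleaner random models to $(G_1,\dots,G_n)$.

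The first step is to establish a ``colorful pseudorandomness'' event that holds with high probability and deterministically implies Hamilton-universality. Concretely, I would prove a \emph{colorful expansion} statement: for every small set $S\subseteq V$ and every assignment of a color $c_v\in[n]$ to each potential neighbor $v$ of $S$, the set of vertices reachable from $S$ via the prescribed colors has size at least $(1+\eps)|S|$. I would also need degree concentration in each $G_i$ and a colorful Dirac-type property saying that for every vertex $v$ and most colors $c$, $v$ has many neighbors in $G_c$. The strength of these events must be sufficient to support the constructions below for \emph{every} pattern $\chi$ at once.

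The heart of the proof is a colorful variant of the Friedman--Pippenger tree embedding lemma: given the pseudorandom collection, one can greedily embed a bounded-degree tree so that the edge embedded at depth $i$ lies in a prescribed graph $G_{c_i}$. For each fixed pattern $\chi$, I would then build a Hamilton cycle in three phases. First, using the colorful tree embedding, construct an absorbing path $A_\chi$ of length $o(n)$ equipped with many ``booster'' leaves, so that any small leftover set $R$ can be inserted into $A_\chi$ while still respecting $\chi$ on the relevant positions. Second, extend $A_\chi$ along the color pattern into a long path covering all but $o(n)$ vertices, using a colorful P\'osa rotation-extension argument powered by the colorful expansion. Third, the absorber incorporates the remaining vertices, and a final colorful rotation or booster argument closes the path into a Hamilton cycle. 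McDiarmid's coupling enters in reducing subproblems---such as finding a short color-constrained path between prescribed endpoints, or controlling typical neighborhoods in a single $G_i$---to more structured models (e.g.\ random bipartite or random regular graphs) where tree-embedding and rotation estimates are cleaner to state and reuse.

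The \textbf{main obstacle} is guaranteeing pseudorandomness uniformly over all $n^n$ patterns: the colorful expansion lemma must be robust enough that a single high-probability event implies the required expansion property for every $\chi$ simultaneously, which at density $p=\Theta(\log n/n)$ forces sharp large-deviation estimates exploiting both the sparsity of each $G_i$ and the independence across the $n$ copies. A secondary difficulty is that a pattern $\chi$ can be extremely unbalanced---some colors may appear $\Omega(n)$ times while others not at all---so the tree embedding, absorbing, and rotation arguments must remain effective regardless of the multiplicities $|\chi^{-1}(c)|$; handling the extreme case of a near-constant $\chi$ (reducing to plain Hamiltonicity in a single $G(n,p)$) and the opposite extreme of a bijective $\chi$ (rainbow Hamiltonicity) within the same framework is the central conceptual hurdle.
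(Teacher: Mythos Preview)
Your plan correctly identifies the main obstacle---the union bound over $n^n$ color-patterns---and your absorption idea via a colorful Friedman--Pippenger embedding is essentially what the paper does in Lemma~\ref{lem: absorption}. However, your proposed resolution of the obstacle has a genuine gap, and your intended role for McDiarmid's coupling is not the right one. The paper does \emph{not} try to find a single pseudorandom event that deterministically implies Hamilton-universality, nor does it run a colorful P\'osa rotation to cover almost all of $V$. Instead it splits each $G_i$ into two independent halves and separates the work: the absorption piece (Lemma~\ref{lem: absorption}) is indeed a w.h.p.\ property of the first halves, valid for every $\chi$ and every small bad set $X$, matching your plan; but the bulk of the cycle---a $\chi$-colored Hamilton path on the remaining vertices joining the endpoints of the absorbing path---is obtained by applying McDiarmid's coupling (Lemma~\ref{lemma:McDiarmid coupling}) to show that, for each fixed $\chi$, the probability the second halves supply such a path is at least the probability that a \emph{single} random graph $H_0$ is Hamilton-connected after deleting a small set (Lemma~\ref{lem:random graphs are hamilton-connected}). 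That single-graph event fails with probability $\exp(-\Omega(n^2p))=n^{-\Omega(n)}$, which is what beats the union bound over $n^n$ patterns.

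You describe McDiarmid's coupling as a device for ``reducing subproblems to more structured models (e.g.\ random bipartite or random regular graphs)''; that is not what it does here. The coupling compares a $\chi$-colored event in independent copies to the corresponding \emph{uncolored} event in a single copy, and it is precisely this reduction that replaces a colorful rotation-extension uniform over all $\chi$ by an uncolored Hamilton-connectivity statement whose failure probability can absorb the $n^n$ union bound. Without this mechanism your sketch has no concrete way to beat $n^n$ at $p=\Theta(\log n/n)$: you acknowledge the obstacle but offer only ``sharp large-deviation estimates'' with no candidate event, and the long-path/closing step is exactly where any per-$\chi$ argument at this density can fail with probability merely polynomially small (isolated vertices in $G_1$ alone force failure probability at least $n^{-C}$ for constant $\chi$), which the union bound cannot afford.
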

Note that the above result is tight up to the constant $C$, since by considering the color-pattern $\chi(i)=1$ for all $i$, the graph $G_1$ is required to contain a Hamilton cycle for $(G_1,\dots,G_n)$ to be Hamilton-universal. While showing the existence of a Hamilton cycle with any specific color-pattern is sufficient for Hamilton-universality in the deterministic setting, dealing with the looming union bound over all color-patterns is the key obstacle to overcome towards Theorem~\ref{thm:main}. 

\subsection{Related work}

While Hamilton-universality has not been studied before in the random setting of Theorem~\ref{thm:main}, there has been some interest in rainbow Hamilton cycles. Anastos and Chakraborti \cite{AC23} showed $(G_1,\ldots,G_n)$ with high probability contains a rainbow Hamilton cycle when each $G_i$ is sampled independently from $G(n,p)$ with $p= \Omega(\log n/n^2)$. For $p=\omega(\log n/n)$, a robust version of this statement has been obtained by Ferber, Han and Mao \cite{ferber2022dirac}. 

Colored Hamilton cycles have also been investigated in a different random model. Let us fix some number $k$ and let $\alpha$ be some distribution on the set of colors $[k]$. We denote by $G(n,p,\alpha)$ the colored random graph obtained by sampling $G(n,p)$ and then, coloring every edge independently according to $\alpha$. In a sequence of improvements, Cooper and Frieze \cite{cooper2002multi}, Frieze and Loh \cite{frieze2014rainbow} and Ferber and Krivelevich \cite{ferber2016rainbow} determined for which $p$ and $k$ such a graph contains a rainbow Hamilton cycle when $\alpha$ is the uniform distribution. For $k=O(1)$ and arbitrary $\alpha$, Frieze and Pegden \cite{frieze2025sequentially} determined the threshold for $p$ such that any specific coloring of a Hamilton cycle is present with high probability. The result in this direction which is closest to universality is given by Chakraborti, Frieze and Hasabnis \cite{CFH23}, who study not specific patterns but the set of vectors $(a_1,\ldots,a_k)$ such that there is a Hamilton cycle with exactly $a_i$ edges of color $i$. For more results in this direction, we recommend \cite[Section 2.6]{friezesurvey}.

One of the key tools in our proof is a colorful extension of the Friedman-Pippenger tree embedding technique. We would like to point out that a very similar setup was introduced independently by Lund and Xu in \cite{lund2025embedding} to embed edge-colored clique subdivisions in pseudorandom graphs. We thank the authors of \cite{lund2025embedding} for pointing out the overlap.
\subsection{Proof sketch}

We will now go through a short outline of the proof of our result. The tool underlying our proof, called McDiarmid's coupling, is a method which allows us to go from Hamilton cycles in a single random graph to rainbow Hamilton cycles in a collection of $n$ independent random graphs. Introduced by McDiarmid \cite{MD80} in 1980, the coupling was mostly overlooked in the random graph community until its recent revival by Ferber \cite{Fe15}, Ferber and Long \cite{FL19} and Montgomery \cite{Mo24}. 

The gist of the method is encapsulated in the following statement, which is essentially due to McDiarmid \cite{MD80}, albeit in a slightly different language. If $\cF$ is a family of ordered $n$-tuples with distinct elements from a ground set $E$ and $S_0, \dots, S_n$ are random subsets of $E$ including each element independently with probability $p$, then $S_1\times\dots\times S_n$ is at least as likely to contain an $n$-tuple of $\cF$ as $S_0\times \dots\times S_0$ is. In our setting, one should think of the ground set $E$ as the collection of all edges of the complete graph on $n$ vertices, thus making the random sets $S_0, \dots, S_n$ Erd\H{o}s-R\'enyi random graphs, while $\cF$ will be a collection of tuples of edges, roughly speaking, representing the Hamilton cycles of the complete graph. 
However, the actual statement we use is slightly more general, since we need a bit more flexibility. 

\begin{lemma}\label{lemma:McDiarmid coupling}
Let $n\geq 1$ be a positive integer and $E$ a set. Let $\cF$ be a family of ordered $n$-tuples from $E\cup\{\star\}$ containing each element of $E$ at most once. Let $S'_0, \dots, S'_n$ be i.i.d. random subsets of $E$, where the element $e\in E$ is included in $S'_i$ with probability $p_e\in [0, 1]$ and let $S_i = S'_i\cup\{\star\}$. 
Then, for any map $\chi:[n]\to [n]$, the probability that $S_0\times \dots\times S_0$ contains an $n$-tuple of $\cF$ is smaller or equal to the probability that $S_{\chi(1)}\times\dots\times S_{\chi(n)}$ contains an $n$-tuple of $\cF$.
\end{lemma}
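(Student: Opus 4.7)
I would follow a standard ``splitting'' (or decoupling) interpolation: starting from the fully coupled LHS configuration (all positions use $S_0$) and gradually splitting off one color at a time, building up to the RHS configuration (position $i$ uses $S_{\chi(i)}$), showing each step weakly increases the probability of containing a tuple from $\cF$. Since $S_0, S_1, \ldots, S_n$ are i.i.d., the LHS probability is unchanged if we replace $S_0$ by $S_{c^*}$ for any $c^* \in \chi([n])$, so one can put both sides into a common probability space on $(S_1, \ldots, S_n)$. I would then interpolate via a sequence of maps $\chi^0 \equiv c^*, \chi^1, \ldots, \chi^K = \chi$, where each $\chi^{k+1}$ is obtained from $\chi^k$ by picking a color $c$ in its image and reassigning some subset $J$ of its preimage to a fresh color $c' \in \chi([n]) \setminus \chi^k([n])$. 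This reduces the lemma to the following single-split claim.

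\textbf{Single-split claim.} For any family $\cG$ of pairs $(A, B)$ of \emph{disjoint} subsets of $E$ and any i.i.d.\ random subsets $S, T$ (with $e \in E$ included independently with probability $p_e$),
\[\Pr\bigl(\exists (A, B) \in \cG : A \cup B \subseteq S\bigr) \leq \Pr\bigl(\exists (A, B) \in \cG : A \subseteq S,\ B \subseteq T\bigr).\]
The reduction from a single split to this claim is obtained by conditioning on all $S_{c''}$ for $c'' \notin \{c, c'\}$ and setting $\cG := \{(A(\sigma), B(\sigma)) : \sigma \in \cF \text{ consistent with the conditioning}\}$, where $A(\sigma) := \{e_i : i \in I \setminus J,\ e_i \neq \star\}$ and $B(\sigma) := \{e_i : i \in J,\ e_i \neq \star\}$ with $I = (\chi^k)^{-1}(c)$. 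The disjointness $A(\sigma) \cap B(\sigma) = \emptyset$ is immediate from the hypothesis that each tuple of $\cF$ uses every element of $E$ at most once.

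\textbf{Proof of the single-split claim.} I would use a coordinate-by-coordinate hybrid argument. Setting $Y_e := \mathbb{1}[e \in S]$, $Z_e := \mathbb{1}[e \in T]$, and for $E_0 \subseteq E$ defining the hybrid $Z^{(E_0)}_e := Y_e$ when $e \in E_0$ and $Z^{(E_0)}_e := Z_e$ otherwise, the probability of $\{\exists (A, B) \in \cG : A \subseteq \{Y = 1\},\ B \subseteq \{Z^{(E_0)} = 1\}\}$ equals the LHS of the claim at $E_0 = E$ and the RHS at $E_0 = \emptyset$. It therefore suffices to show that removing one element $e^*$ from $E_0$ weakly increases this probability. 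After conditioning on $(Y_e, Z_e)_{e \neq e^*}$, let $r_N, r_A, r_B \in \{0,1\}$ indicate the existence of a pair $(A, B) \in \cG$ satisfying the intrinsic constraints (from the conditioning) with, respectively, $e^* \notin A \cup B$, $e^* \in A$, or $e^* \in B$; disjointness rules out $e^* \in A \cap B$. Marginalizing over $(Y_{e^*}, Z_{e^*})$, the difference between the two hybrid probabilities works out to
\[p_{e^*}(1 - p_{e^*})\bigl[(r_N \vee r_A) + (r_N \vee r_B) - r_N - (r_N \vee r_A \vee r_B)\bigr],\]
which is non-negative by a finite case check on $(r_N, r_A, r_B) \in \{0,1\}^3$. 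The main (if small) obstacle is recognizing that disjointness is genuinely needed: if a pair with $e^* \in A \cap B$ were allowed one would need a fourth indicator $r_{AB}$, and the combinatorial identity would fail in the case $r_N = 0$, $r_A = r_B = 1$ that already produces the only strict gain in the disjoint setting.
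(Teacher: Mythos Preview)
Your proof is correct. Both your argument and the paper's are hybrid/interpolation arguments that ultimately hinge on the at-most-once hypothesis, but they are organized differently. The paper interpolates directly over the elements of $E$ in a single pass: writing $E=\{e_1,\dots,e_m\}$, at step $i$ the membership of $e_i$ switches, simultaneously at every coordinate $k$, from being decided by $S_0'$ to being decided by $S_{\chi(k)}'$; since any tuple of $\cF$ uses $e_i$ in at most one coordinate, the conditional success probability at that step is governed by a single Bernoulli with parameter $p_{e_i}$ in both the before and after configurations, yielding a short three-case analysis and no color-splitting layer at all. Your route instead first reduces, color by color, to the standalone single-split inequality and then proves that by an element-wise hybrid with the eight-case indicator check. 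This is slightly longer but more modular: the single-split claim is a clean statement in its own right, and your formulation makes explicit exactly where disjointness (hence the at-most-once hypothesis) is needed, namely to rule out a fourth indicator $r_{AB}$ that would break the inequality in the case $r_N=0$, $r_A=r_B=1$.
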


Here is a simple application of Lemma~\ref{lemma:McDiarmid coupling}. Let $\cC$ denote the set of edge-ordered Hamilton cycles of the complete graph on $n$ vertices. That is $\cC$ is the set of $n$-tuples of edges $(e_1, \dots, e_n)$ forming a Hamilton cycle in this order. If we apply Lemma \ref{lemma:McDiarmid coupling} with $\cF=\cC$ and any fixed map $\chi:[n]\rightarrow [n]$, it follows that 
a collection of $n$ independent random graphs is at least as likely to contain a $\chi$-colored Hamilton cycle 
as a single random graph is to contain a Hamilton cycle. Therefore, a collection of $n$ independent random graphs $G(n, p)$ with high probability contains a Hamilton cycle in a given color-pattern already at probability $p\gg \log n/n$. 

Note that in order to prove Theorem~\ref{thm:main}, we need to find a Hamilton cycle for \textit{every} color-pattern. Therefore, we would like to apply a union bound over the set of all color-patterns. 
However, the random graph $G(n, C\log n/n)$ fails to have a Hamilton cycle with probability at least $n^{-C}$, since it has isolated vertices with at least this probability. Therefore, applying McDiarmid's coupling together with a simple union bound is not enough to complete the proof. 

The way to circumvent this issue and boost the probability of success is to show that with \textit{very} high probability, one can remove up to $\eps n$ vertices from $G(n,C\log n/n)$ so that the rest is Hamilton-connected, for some small constant $\eps>0$. Here, we say that a graph $G$ is Hamilton-connected if $G$ contains a Hamilton path with endpoints $u,v$ for any distinct vertices $u,v\in V(G)$. Before we state things more precisely, it will be very convenient for us to partition the set of vertices into $L$ and $R$ with $|L|=\lfloor n/2\rfloor$ and $|R|=\lceil n/2\rceil$. From now on, this partition will be fixed throughout the proof.

\begin{lemma}\label{lem:random graphs are hamilton-connected}
For every $\eps>0$ there exists a constant $C>0$ such that the following holds. If $G\sim G(n, p)$ is a random graph where $p\geq C\log n/n$, with probability at least $1-n^{-2n}$ we have the following. There exists some set $X$ of at most $\varepsilon n$ vertices such that for every set $Y\subseteq L$, the induced subgraph of $G$ on $V(G)\backslash(X\cup Y)$ is Hamilton-connected.
\end{lemma}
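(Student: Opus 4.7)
The plan is to produce $X$ by an iterative pruning procedure so that the worst-case induced subgraph $G[R\setminus X]$ is an expander of the quality needed by P\'osa's rotation-extension argument, and then to observe that for any $Y\subseteq L$ the graph $G[V\setminus(X\cup Y)]$ is only better, since vertices of $L\setminus(X\cup Y)$ each come with many neighbors in $R\setminus X$. Once good expansion and edge-distribution properties hold for $G[V\setminus(X\cup Y)]$, classical Hamilton-connectivity theorems for expanders apply.

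Fix large constants $K=K(\eps)$ and $\alpha=\alpha(\eps,K)$ with $1/K,\alpha\ll\eps$. I would define $X\subseteq V(G)$ by the iterative rule $X_{t+1}=X_t\cup\bigcup\{S:S\subseteq V,\,|S|\leq \alpha n,\,|N_G(S)\cap(R\setminus(S\cup X_t))|<K|S|\}$, starting with $X_0=\emptyset$; let $X$ be the fixed point. By construction, every $S'\subseteq V\setminus X$ with $|S'|\leq \alpha n$ satisfies $|N_G(S')\cap(R\setminus X)|\geq K|S'|\geq 2|S'|+2$ as long as $K\geq 4$. To bound $|X|$, I would observe that $|X|\geq \eps n$ implies the existence of an $X^*\subseteq V$ of size in $[\eps n,2\eps n]$ covered entirely by small sets $S$ of size $\leq \alpha n$ each of which has few neighbors in $R\setminus X^*$; a Chernoff bound gives failure probability $\exp(-\Omega_\eps(n^2p))$ per $X^*$, and a union bound over the $\exp(O_\eps(n))$ choices of $X^*$ is absorbed once $C$ is large enough relative to $\eps$.

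Next I would verify, for each $Y\subseteq L$, the standard P\'osa hypotheses on $H_Y:= G[V\setminus(X\cup Y)]$: (i) every $S\subseteq V(H_Y)$ of size $\leq |V(H_Y)|/4$ has $|N_{H_Y}(S)\setminus S|\geq 2|S|+2$, and (ii) every two disjoint subsets of $V(H_Y)$ of size $\geq \alpha n$ are joined by an edge of $H_Y$. For $|S|\leq \alpha n$ condition (i) is immediate from the construction of $X$, while for $|S|\in[\alpha n,|V(H_Y)|/4]$ the standard union bound over ``witnesses'' $(S,T=N(S)\setminus S)$ delivers the bound with probability $1-\exp(-\Omega_\eps(n^2p))$, since the factor $\binom{n}{s}\binom{n}{2s}$ is tame once $s\geq \alpha n$. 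Condition (ii) is a similar union bound. Both conditions are uniform in $Y$ because the worst case is $Y=L$ and extra vertices from $L$ only contribute further edges. Once (i) and (ii) are in place, the classical rotation-extension argument of P\'osa yields that $H_Y$ is Hamilton-connected.

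The main obstacle is matching the very strong exponent $\exp(-\Omega_\eps(n^2p))$ of the claim. In the critical regime $p=\Theta(1/n)$ this is $\exp(-\Omega(n))$, so the union bounds must genuinely deliver this rate. In particular, a naive union bound over small sets of constant size does not close --- the $\binom{n}{s}$ factor dominates the $\exp(-\Omega(sC))$ saving --- which is precisely why those sets must be absorbed into $X$ via the iterative pruning, with $\alpha$ and $K$ tuned so that the subsequent union bound (for sets of linear size) does close. The second delicate point is that in the smallest-probability regime the average degree inside $R\setminus X$ is only a constant, so one has very little slack: $K$ must be taken large enough that $K$-expansion into $R\setminus X$ immediately implies the $2|S|+2$ expansion required by P\'osa, while the pruning thresholds are chosen so that $|X|\leq \eps n$ still holds with the required probability.
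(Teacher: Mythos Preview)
Your approach matches the paper's: remove a small set $X$ of vertices that expand poorly \emph{into $R$} (so that deleting any $Y\subseteq L$ cannot damage the expansion of the remainder), verify that $G[V\setminus(X\cup Y)]$ is an expander with edges between all linear-size sets, and then invoke an expander-to-Hamilton-connectedness theorem. The paper's execution is cleaner than your iterative pruning: it takes $X$ to be a \emph{maximum-size} set with $|X|\leq n/(10K)$ and $|N_G(X)\cap R|\leq K|X|$, and reduces the entire probabilistic input to the single event ``any two disjoint sets of size $n/(10K)$ span an edge''. Expansion of small $U\subseteq V\setminus X$ then follows deterministically from the maximality of $X$, and the medium and large ranges follow from that one event by a short case analysis. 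Your scheme also works once the bookkeeping for $|X|\leq\eps n$ is made precise (in particular, your union-of-bad-$S$ argument needs the $S$'s to be chosen disjoint so that $\sum|S_i|=|X^*|$ and hence $|N_G(X^*)\cap(R\setminus X^*)|\leq K|X^*|$).

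There is one genuine gap. You conclude that ``the classical rotation-extension argument of P\'osa yields that $H_Y$ is Hamilton-connected'' from $|N_{H_Y}(S)\setminus S|\geq 2|S|+2$ together with edges between linear-size sets. P\'osa's classical argument gives \emph{Hamiltonicity}; obtaining a Hamilton path between \emph{every} prescribed pair of endpoints requires a quantitatively stronger expansion hypothesis and a dedicated theorem. The paper invokes Theorem~\ref{thm:expanders are Hamilton-connected}, which asks for $|N(U)|\geq C_0|U|$ for all $|U|<|V(H_Y)|/(2C_0)$ for an absolute constant $C_0$, together with edges between all pairs of sets of size $|V(H_Y)|/(2C_0)$. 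Since you already take $K=K(\eps)$ large and obtain $K$-fold expansion into $R\setminus X$, the fix is immediate---choose $K\geq C_0$ and cite such a result---but as written, factor-$2$ expansion plus ``classical P\'osa'' does not deliver Hamilton-connectedness.
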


This lemma gives us not only a way of finding almost-spanning paths in $G$, but we are also allowed to pick and choose which vertices of $L$ we want to include in our path -- something we will find very useful later on. However, the almost-spanning path we construct will not be able to cover vertices of $X$. Therefore, we need a final lemma, which allows us to cover sets of at most $\eps n$ vertices using a path with prescribed edges colors. For that, we need a different argument based on a colorful version of the Friedman-Pippenger tree embedding technique, which was first introduced in \cite{FP87} and further developed by Glebov, Johanssen and Krivelevich \cite{glebov2013hamilton}, Montgomery \cite{montgomery2019spanning} and Dragani{\'c}, Krivelevich and Nenadov \cite{draganic2022rolling}. 

Given a tuple $(G_1, \ldots, G_n)$ of graphs and a map $\chi:[n]\to[n]$, we say that $(e_1, \dots, e_n)\in \cC$  is a \emph{$\chi$-colored Hamilton cycle} if $e_i\in E(G_{\chi(i)})$ for all $1\leq i\leq n$ and, similarly, a (not necessarily Hamiltonian) path $(e_1, \dots e_k)$ is \textit{$\chi$-colored} if $e_i\in E(G_{\chi(i)})$ for all $1\leq i \leq k$.

\begin{lemma}\label{lem: absorption}
There exists $C$ such that for any $\eps\in (0, 10^{-4})$, large integer $n$ and $p\geq C\log n/n$ the following holds with high probability. Let $(G_1, \ldots, G_n)$ be a tuple of independent random graphs on the same vertex set sampled from $G(n,p)$. Then, for every map $\chi:[n]\to [n]$ and every set $X$ of at most $\varepsilon n$ vertices, there exists a $\chi$-colored path $P$ with $X\subseteq V(P)\subseteq X\cup L$ such that both endpoints of $P$ are outside $X$.
\end{lemma}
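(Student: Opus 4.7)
The plan is to reduce the colorful problem to a single-graph statement via McDiarmid's coupling (Lemma~\ref{lemma:McDiarmid coupling}), establish that single-graph statement with a failure probability exponentially small in $n^2 p = \Omega(n \log n)$ via a Friedman-Pippenger-style tree embedding, and finally take a union bound over all color-patterns $\chi$ and absorption sets $X$.

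Fix any $X \subseteq V$ with $|X| = s \leq \eps n$ and any $\chi : [n] \to [n]$, and set $\ell = 3s$. Consider the path template in which positions $3i-1$ (for $1 \leq i \leq s$) are reserved for the vertices of $X$ in some prescribed order, while the remaining $2s+1$ positions — including both endpoints — are reserved for vertices of $L \setminus X$. Let $\cF_{X,\ell}$ be the family of $n$-tuples whose first $\ell$ entries are the edges of an ordered path realising this template and whose remaining entries equal $\star$. Since the edges of a path are distinct, $\cF_{X,\ell}$ satisfies the hypothesis of Lemma~\ref{lemma:McDiarmid coupling}, which gives
\[
\Pr\bigl[(G_1, \ldots, G_n) \text{ has no } \chi\text{-colored } \cF_{X,\ell}\text{-tuple}\bigr] \leq q_X,
\]
where $q_X$ is the probability that a single $G \sim G(n,p)$ contains no $\cF_{X,\ell}$-tuple. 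It therefore suffices to prove $q_X \leq \exp(-\Omega(n^2 p))$ for each fixed $X$, because then a union bound over the $n^n$ color-patterns and the at most $2^n$ choices of $X$ yields total failure probability
\[
n^n \cdot 2^n \cdot \exp\bigl(-\Omega(n^2 p)\bigr) \leq \exp\bigl(O(n \log n) - \Omega(Cn \log n)\bigr),
\]
which is $o(1)$ provided $C$ is chosen sufficiently large.

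To bound $q_X$, I would extract via Chernoff-type concentration (comparable in strength to the estimates underlying Lemma~\ref{lem:random graphs are hamilton-connected}) a robust expansion property holding with probability at least $1 - \exp(-\Omega(n^2 p))$: for every $S, T \subseteq V$ with $|S|, |T| \leq \eps n$, $|N_G(S) \setminus T| \geq d|S|$ for some $d = \Omega(np)$. Conditioned on this expansion, the templated path can be embedded edge-by-edge using a Friedman-Pippenger extension argument. Free positions (those reserved for $L \setminus X$) are filled using expansion to choose a new vertex in $L \setminus X$ avoiding previously used ones, while constrained positions (where the next vertex must be a prescribed element of $X$) are reached by first using expansion to enter a neighborhood of the target $X$-vertex and then routing to it via a short detour through $L \setminus X$. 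Processing the template in a suitable order — for example, alternately extending from both endpoints and handling the $X$-vertices from the outside inward — ensures the extension never gets stuck.

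The main obstacle is attaining the bound $q_X \leq \exp(-\Omega(n^2 p))$. A failure probability exponentially small in $n^2 p = \Omega(n \log n)$ is essential in order to absorb the $n^n$ factor from the union bound over color-patterns, and this is substantially stronger than the polynomial-in-$n$ failure bounds that naive random-graph absorption arguments produce. Both the extraction of a sufficiently concentrated expansion estimate and the careful Friedman-Pippenger embedding within the constraints imposed by $X$, $L$, and the prescribed endpoints are the delicate parts of the argument.
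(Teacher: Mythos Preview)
Your approach has a fundamental gap: the bound $q_X \leq \exp(-\Omega(n^2 p))$ that you need for the union bound is \emph{provably unattainable}. For a fixed vertex $x \in X$, the probability that $x$ is isolated in a single $G \sim G(n,p)$ is $(1-p)^{n-1} \approx n^{-C}$, and if this happens no path can cover $x$. Hence $q_X \geq n^{-C}$, which is only polynomially small in $n$. The expansion property you propose to extract (``for every $S,T$ with $|S|,|T|\leq \eps n$, $|N_G(S)\setminus T|\geq d|S|$'') fails for exactly the same reason: taking $|S|=1$ shows it cannot hold with probability $1-\exp(-\Omega(n^2p))$. This is precisely the obstacle flagged in the introduction when explaining why McDiarmid plus a na\"ive union bound does not suffice for the main theorem; the same obstruction bites here, and no amount of care in the Friedman--Pippenger step can repair it, because the issue is probabilistic, not combinatorial.

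The paper's proof circumvents this by \emph{never} reducing to a single graph and \emph{never} union-bounding over $\chi$. Instead it isolates three pseudorandom properties (A), (B), (C) of the full collection $(G_1,\dots,G_n)$ --- properties that do not mention $\chi$ or $X$ --- and shows each holds with high probability (only $1-o(1)$ is needed, nothing stronger). It then argues \emph{deterministically} that (A)--(C) imply the existence of the desired $\chi$-colored path for every $\chi$ and every $X$. The deterministic construction has two phases: an ``economical'' phase that alternates between $X$ and $L\setminus X$ and, via an extremal argument with star-systems and property (A), covers all but $O(\eps n/\log n)$ vertices of $X$; and a colorful Friedman--Pippenger phase (Lemmas~\ref{lem: forward} and~\ref{lem: backward}) that connects up the remaining handful of vertices. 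The point is that a vertex of $X$ may well have low degree in some $G_i$, but property (B) limits the total number of bad color--vertex pairs, and the deterministic argument routes around them using the other colors --- flexibility you forfeit the moment you pass to a single graph via McDiarmid.
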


Let us now show how one can complete the proof, modulo these three lemmas.

\begin{proof}[Proof of Theorem~\ref{thm:main}, assuming Lemmas~\ref{lemma:McDiarmid coupling}, \ref{lem:random graphs are hamilton-connected}, \ref{lem: absorption}.]
Let us set $\eps=10^{-5}$ and let $C$ be a sufficiently large constant so that Lemma~\ref{lem:random graphs are hamilton-connected} applies. We begin the proof by a slight change in perspective. Instead of sampling the graphs from $G(n, p)$, for each $i\in [n]$ we generate two random graphs $G_i, H_i\sim G(n, p')$ with $p'=1-\sqrt{1-p}\geq p/2\geq C\log n/2n$, independently for each $i$. Then, the graph $G_i\cup H_i$ follows the distribution of $G(n, p)$. Hence, to prove the statement of the theorem, it suffices to show that the $n$-tuple of graphs $(G_1\cup H_1, \dots, G_n\cup H_n)$ is Hamilton-universal with high probability. 

By Lemma~\ref{lem: absorption}, the graphs $(G_1, \dots, G_n)$ with high probability have the property that for each map $\chi:[n]\to [n]$, and for each set of vertices $X\subseteq [n]$ with $|X|\leq \eps n$, there is a $\chi$-colored path $P_X$ in $(G_1, \dots, G_n)$ which covers $X$ and is disjoint from $R\setminus X$. Throughout the rest of the proof, fix a single outcome of $(G_1, \dots, G_n)$ and condition on the fact this outcome satisfies the conclusion of Lemma~\ref{lem: absorption}. 

Having fixed the graphs $(G_1,\dots,G_n)$, we next analyze the probability that $(G_1\cup H_1, \dots G_n\cup H_n)$ contains a $\chi$-colored Hamilton cycle for some given color-pattern $\chi:[n]\rightarrow [n]$, with the intention of using the union bound over all $\chi$. Here, the probability refers to the choice of the graphs $(H_1, \dots, H_n)$, since the $n$-tuple $(G_1, \dots, G_n)$ has been fixed. For any such $\chi$, let $\mathbf{A}_\chi=\mathbf{A}_\chi(G_1,\dots,G_n)$ denote the set of $n$-tuples of graphs $(H_1,\dots, H_n)$ such that $(G_1\cup H_1, \dots, G_n\cup H_n)$ contains a $\chi$-colored Hamilton cycle. 
Let $H_0$ denote an additional random graph from $G(n, p')$. We aim to show the following two statements.

\begin{enumerate}[label=(\alph*)]
    \item\label{(a)} $\Pr[(H_0, \ldots, H_0) \in \mathbf{A}_\chi]\geq 1-n^{-2n}$.
    \item\label{(b)} $\Pr[(H_1,\ldots,H_n)\in\mathbf{A}_\chi]\geq \Pr[(H_0,\ldots,H_0)\in \mathbf{A}_\chi]$.
\end{enumerate}

First, we prove \ref{(a)} using Lemma~\ref{lem:random graphs are hamilton-connected}. Applying this lemma to $H_0$, if $C$ is large enough, with probability at least $1-n^{-2n}$ there exists a set $X\subseteq V$ with $|X|\leq \varepsilon n$ such that for every $Y\subseteq L$, the graph $H_0-X-Y$ is Hamilton-connected. If such a set $X$ exists and $C$ is large enough, recall that $(G_1, \dots, G_n)$ satisfies the conclusion of Lemma~\ref{lem: absorption}, meaning that there is a $\chi$-colored path $P=e_1e_2\dots e_k$ with $X\subseteq V(P)\subseteq X\cup L$ and with endpoints outside $X$. If we denote the endpoints of $P$ by $u, v$, the graph $H_0-(V(P)\backslash\{u, v\})$ is Hamilton-connected and, in particular, contains a Hamilton path $Q=e_{k+1}\dots e_n$ from $v$ to $u$. Hence $(G_1\cup H_0, \dots, G_n\cup H_0)$ contains the $\chi$-colored Hamilton cycle $PQ=e_1\dots e_n$, which implies $(H_0,\dots,H_0)\in \mathbf{A}_\chi.$

To prove \ref{(b)}, we apply Lemma~\ref{lemma:McDiarmid coupling}. Let $E=E(K_n)$ and define the collection of $n$-tuples $\cF$ of elements in $E$ as follows: for each $n$-tuple of edges forming a Hamilton cycle $(e_1,\dots,e_n)\in  \cC$, we define a corresponding tuple $(f_1, \dots, f_n)$, where $f_i=e_i$ if $e_i\notin E(G_{\chi(i)})$ and $f_i=\star$ otherwise. Let $\mathcal{F}$ denote the set of all such $(f_1,\dots,f_n)$. Observe that, since $H_0, \dots, H_n$ are sampled from $G(n, p')$, their edge-sets $E(H_i)$ are random subsets of $E$, including each element $e\in E$ with probability $p_e=p'$. 

It follows from the above definitions that $(G_1\cup H_1, \dots, G_n\cup H_n)$ contains a $\chi$-colored Hamilton cycle, that is $(H_1, \dots, H_n)\in\mathbf{A}_\chi$, if and only if 
\[(f_1, \dots, f_n)\in \big(E(H_{\chi(1)})\cup\{\star\}\big)\times \dots \big(E(H_{\chi(n)})\cup\{\star\}\big)\] 
for some $(f_1,\dots,f_n)\in \cF$. Similarly $(H_0,\dots, H_0)\in \mathbf{A}_\chi$ if and only if 
\[(f_{1}, \dots, f_{n})\in \big(E(H_0)\cup\{\star\}\big)\times \dots \big(E(H_0)\cup\{\star\}\big)\] for some $(f_1,\dots,f_n)\in \cF$. The conclusion of Lemma~\ref{lemma:McDiarmid coupling} then directly implies \ref{(b)}.

Having confirmed inequalities \ref{(a)} and \ref{(b)}, Theorem \ref{thm:main} follows from a union bound argument over the possible color-patterns $\chi:[n]\rightarrow [n]$. Namely, if $(G_1, \dots, G_n)$ satisfies the conclusion of Lemma~\ref{lem: absorption}, which happens with high probability, we have
\begin{align*}
    &\Pr\Big[(G_1\cup H_1, \dots, G_n\cup H_n)\text{ is not Hamilton-universal}\Big| (G_1,\dots,G_n)\Big]\leq \\
    &\qquad\sum_{\chi:[n]\to [n]}\Pr\Big[(H_1, \dots, H_n)\not\in \mathbf{A}_\chi\Big]\leq n^n\cdot n^{-2n}=o(1).
\end{align*}
Therefore, we conclude that a collection of $n$ independent random graphs from $G(n, p)$ is Hamilton-universal with high probability when $p\geq C\log n/n$.
\end{proof}

\section{Proof details}
\subsection{McDiarmid's coupling}

The proof of Lemma \ref{lemma:McDiarmid coupling} follows the strategy of \cite{MD80}. We include the proof for the convenience of the reader since our setup is slightly different than that of \cite{MD80}.

\begin{proof}[Proof of Lemma~\ref{lemma:McDiarmid coupling}.]
Let $E=\{e_1,\dots, e_m\}$ and let $\cF\subseteq (E\cup \{\star\})^n$ be such that each $n$-tuple in $\cF$ contains each element $e_i\in E$ at most once. We will prove the statement by considering a sequence $\cA_0,\ldots,\cA_m$ of $n$-tuples of subsets of $E$.
For every $0\leq i\leq m$, let $\cA_i=(A_1^i,\ldots,A_n^i)$ be as follows:
\begin{itemize}
   \item for every $1\leq j\leq i$ and $1\leq k\leq n$, we include $e_j$ in $A_k^i$ if $e_j\in S'_{\chi(k)}$;
   \item for every $i+1\leq j\leq m$ and $1\leq k\leq n$, we include $e_j$ in $A_k^i$ if $e_j\in S'_{0}$.
\end{itemize}
By definition of these $n$-tuples, we have that $\cA_0=(S'_0, \dots, S'_0)$ and $\cA_m=(S'_{\chi(1)}, \dots, S'_{\chi(n)})$. Therefore, it suffices to prove that, for every $1\leq i\leq m$, $(A^{i-1}_1\cup\{\star\})\times\dots\times (A^{i-1}_n\cup\{\star\})$ is at most as likely to contain a tuple from $\cF$ as $(A^i_1\cup\{\star\})\times \dots\times (A^i_n\cup\{\star\})$. Observe that $A^{i-1}_k$ and $A^{i}_k$ only potentially differ on $e_i$ for each $1\leq k \leq n$. Let $T'_k=A^{i-1}_k\backslash \{e_i\}=A^{i}_k\backslash \{e_i\}$ and $T_k = T'_k\cup\{\star\}$.

Let us consider the conditional probability for either $(A^{i-1}_1\cup\{\star\})\times \dots\times (A^{i-1}_n\cup\{\star\})$ or $(A^{i}_1\cup\{\star\})\times \dots\times (A^{i}_n\cup\{\star\})$ to contain an $n$-tuple of $\cF$, given the sets $T_1,\dots,T_n$. Let $P_{i-1}=P_{i-1}(T_1,\dots,T_n)$ and $P_i=P_i(T_1,\dots,T_n)$ denote these respective conditional probabilities. We have three potential outcomes:
\begin{itemize}
    \item either $T_1\times\dots\times T_n$ already contains an $n$-tuple of $\cF$, or
    \item $(T_1\cup \{e_i\})\times\dots\times (T_n\cup \{e_i\})$ contains an $n$-tuple $\cF$ but $T_1\times\dots\times T_n$ does not, or
    \item $(T_1\cup \{e_i\})\times\dots\times (T_n\cup \{e_i\})$ does not contain an $n$-tuple of $\cF$.
\end{itemize}

In the first case, $(A^{i-1}_1\cup\{\star\})\times \dots\times (A^{i-1}_n\cup\{\star\})$ and $(A^{i}_1\cup\{\star\})\times \dots\times (A^{i}_n\cup\{\star\})$ both contain an $n$-tuple of $\cF$ with probability $1$, so $P_{i-1}=P_i=1$.

In the second case, let $(f_1, \dots, f_n)\in \cF$ be an $n$-tuple contained in $(T_1\cup \{e_i\})\times\dots\times (T_n\cup \{e_i\})$. Then, $(A^{i-1}_1\cup\{\star\})\times \dots\times (A^{i-1}_n\cup\{\star\})$ contains an $n$-tuple of $\cF$ with probability $p_{e_i}$, since this is the probability $e_i$ is included in $S_0'$ and thus, in the sets $A^{i-1}_1, \dots, A^{i-1}_n$. Since $T_1\times\dots\times T_n$ contains no $n$-tuple from $\cF$, $e_i$ appears in $(f_1, \dots, f_n)$ on the $\ell$-th coordinate, say. As $e_i$ appears at most once in $(f_1, \dots, f_n)$, the other elements of $(f_1, \dots, f_n)$ are already present in the appropriate sets of the tuple $(T_1, \dots, T_n)$.
Then, $(f_1, \dots, f_n)$ is completed in $(A^{i}_1, \dots, A^{i}_n)$ if $e_i$ is included in $A^{i}_{\ell}$, i.e. if $e_i\in S'_{\chi(\ell)}$, which happens with probability $p_{e_i}$. Hence, we conclude that $P_{i}\geq p_{e_i}=P_{i-1}$.

Finally, in the third case, clearly $P_{i-1}=P_{i}=0$. Thus, we have in all cases that $P_{i-1}\leq P_{i}$, completing the proof.
\end{proof}
\subsection{Almost spanning paths in random graphs}

We will next prove Lemma~\ref{lem:random graphs are hamilton-connected}.
In doing so, we use the following result about finding Hamilton paths in expanders, which was proven in \cite[Theorem 7.1.]{draganic2024hamiltonicity}.

\begin{theorem} \label{thm:expanders are Hamilton-connected}
There exists an absolute constant $C\geq 1$ such that the following statement holds. Let $G$ be a graph on $n$ vertices with the following two properties
\begin{itemize}
    \item  $|N(U)|\geq C|U|$ for all sets $U\subseteq V(G)$ of size $|U|< n/2C$, and
    \item there exists an edge between any two disjoint sets $U, W\subseteq V(G)$ of size $|U|, |W|\geq n/2C$.
\end{itemize}
Then, $G$ is Hamilton-connected. In other words, for any two distinct vertices $x, y\in V(G)$ there exists a Hamilton path in $G$ with these two endpoints.
\end{theorem}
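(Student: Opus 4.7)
The plan is a Pósa rotation-extension argument, in the form tailored to producing a Hamilton path between two prescribed endpoints rather than merely a Hamilton cycle. Fix distinct $x,y\in V(G)$ and let $P$ be a longest path in $G$ with endpoints $x$ and $y$; such a path exists because the two expansion hypotheses readily imply that $G$ is connected. I would aim to derive a contradiction from the assumption $V(P)\ne V(G)$ by showing that under this assumption, $P$ can be strictly extended while keeping $x$ and $y$ as its endpoints.

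The key tool is Pósa's rotation lemma: if $P=v_1v_2\dots v_k$ and $v_k$ has a neighbor $v_i$ with $1<i<k-1$, then $v_1\dots v_iv_kv_{k-1}\dots v_{i+1}$ is a new path on the same vertex set ending at $v_{i+1}$. Iterating such rotations from the $y$-end while keeping $x$ fixed produces a set $S_y\subseteq V(P)$ of alternative ``$y$-side'' endpoints. Standard rotation accounting shows that $N(S_y)\cap(V(G)\setminus V(P))=\emptyset$ by maximality of $P$, and that $|N(S_y)|\le 3|S_y|$ after bookkeeping the ``breaker'' vertices involved in the rotations. Together with the first expansion hypothesis, this forces $|S_y|\ge n/(2C)$ whenever $C$ is chosen large enough to dominate the constant $3$. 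Symmetrically, a nested round of rotations on the $x$-side, performed on each path $x\to v$ (for $v\in S_y$) while keeping $v$ fixed, yields sets $S_x(v)\subseteq V(P)$ with $|S_x(v)|\ge n/(2C)$.

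Now, assuming for contradiction that $z\notin V(P)$, iterating the first expansion hypothesis from $\{z\}$ grows the set of vertices reachable from $z$ by short $G$-paths geometrically, producing a set $N_z\subseteq V(P)$ with $|N_z|\ge n/(2C)$. Applying the second expansion hypothesis between large disjoint subsets of $\bigcup_{v\in S_y}S_x(v)$ and $N_z$ gives an edge that, when combined with the two-sided rotation structure and a short $z$-to-$V(P)$ path, reroutes $P$ into a strictly longer $x$-to-$y$ path incorporating $z$, contradicting the maximality of $P$. The main obstacle lies precisely in this final rerouting step: standard Pósa rotations naturally fix one endpoint and move the other, so preserving both $x$ and $y$ simultaneously as endpoints requires the nested two-sided rotations described above, together with a careful use of the linear-sized-sets expansion to bridge the $x$-side and $y$-side rotation closures into a single strictly longer $x$-to-$y$ path. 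Tracking the constants so that the ``$3|S|$'' loss in the rotation accounting is absorbed by the $C|S|$ expansion is the main quantitative subtlety and dictates the choice of $C$.
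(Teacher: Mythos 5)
There is a genuine gap, and it sits exactly where you flag ``the main obstacle.'' First, note that the paper does not prove this statement at all: it is imported verbatim from Dragani\'c, Montgomery, Munh\'a Correia, Pokrovskiy and Sudakov \cite[Theorem 7.1]{draganic2024hamiltonicity}, where it is a central and difficult result whose proof goes far beyond rotation--extension (it relies on new machinery for building paths in expanders and absorption-type constructions). The reason such machinery is needed is precisely the step your sketch leaves unresolved. Your two-sided rotation scheme produces, for each $v\in S_y$, a set $S_x(v)$ such that $(u,v)$ with $u\in S_x(v)$ are endpoint pairs of paths on $V(P)$; but this family of usable pairs is \emph{not} a product set $A\times B$. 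The second hypothesis only guarantees an edge between two disjoint sets of linear size, so the edge it yields between $\bigcup_{v\in S_y}S_x(v)$ and (say) $S_y$ or $N_z$ joins some $u\in S_x(v)$ to some vertex $v'\neq v$, and such an edge closes nothing. In the random-graph setting this is repaired by a booster/sprinkling argument over the quadratically many pairs; in the purely deterministic expander setting there is no analogous tool, and this is exactly why Hamiltonicity of $C$-expanders (constant $C$) was open until the cited paper. So the sentence ``gives an edge that \dots\ reroutes $P$ into a strictly longer $x$-to-$y$ path'' is not justified by the two hypotheses.

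A second, independent problem is the prescribed endpoints. Rotations move endpoints: the paths you obtain after rotating on both sides have endpoints $u\in S_x(v)$ and $v\in S_y$, not $x$ and $y$. Even if you could attach the outside vertex $z$ at such an endpoint $u$, the resulting longer path would no longer be an $x$--$y$ path, so it does not contradict the maximality of $P$ among $x$--$y$ paths. Fixing both endpoints is not a bookkeeping issue that ``nested rotations'' resolve; the standard reductions (e.g.\ adding an auxiliary vertex joined only to $x$ and $y$ and seeking a Hamilton cycle) break the expansion hypothesis at that auxiliary vertex, which is again symptomatic of the fact that this theorem cannot be obtained by the classical P\'osa argument alone. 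The parts of your sketch that do work (connectivity, and that the rotation endpoint sets must have size at least $n/2C$ because $|N(S)|\le 3|S|$ for such sets) are standard, but they only reproduce the starting point of the known, much longer proof.
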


In the literature, the graphs satisfying the two properties mentioned above are often called $C$-expanders. However, since we also consider different kinds of expanders in this paper, we prefer to avoid reusing the terminology.

Let us now give the proof of Lemma~\ref{lem:random graphs are hamilton-connected}.

\begin{proof}[Proof of Lemma~\ref{lem:random graphs are hamilton-connected}.]
As indicated before, the key ingredient of the proof is Theorem~\ref{thm:expanders are Hamilton-connected}, which ultimately ensures the Hamilton-connectedness of the induced subgraph on $V(G)\backslash (X\cup Y)$. Hence, let $C_0$ be the absolute constant from Theorem~\ref{thm:expanders are Hamilton-connected} and set $K=\max\{1/\eps, C_0\}$. Finally, we set $C=200 K^2$.

Let us denote by $\cA$ the event that the random graph $G\sim G(n, p)$ has the property that any two disjoint sets $U, W\subseteq V(G)$ of size $\frac{n}{10K}$ have an edge between them. To bound the probability of the complement $\overline{\cA}$, we use a union bound over all pairs of disjoint sets $(U, W)$ of size at least $\frac{n}{10K}$.

The probability that no edge exists between $U$ and $W$ is $(1-p)^{|U||W|}\leq e^{-p\cdot (n/10K)^2}$. Recall that $p\geq C\log n/n$. Since there are at most $3^n$ pairs of disjoint sets $U, W\subseteq V(G)$, and by the choice of the constant $C$, we conclude that
    \begin{align*}
        \Pr\left[\hspace{0.03cm}\overline{\mathcal{A}}\hspace{0.03cm}\right]\leq 3^n e^{-\frac{pn^2}{100K^2}}\leq n^{-2n}.
    \end{align*}

Suppose now that $\cA$ holds and let us define the set $X$ to be the largest set of size $|X|\leq \frac{n}{10K}\leq \eps n$ which has $|N_G(X)\cap R|\leq K|X|$. What remains to show is that for every set $Y\subseteq L$, the subgraph of $G$ induced on $V(G)\backslash (X\cup Y)$ satisfies the assumptions of Theorem~\ref{thm:expanders are Hamilton-connected}. Once this is done, we will immediately obtain the conclusion that this subgraph is Hamilton-connected, as needed.

Hence, let us fix a set $Y\subseteq L$ and let us denote the number of vertices of $G'=G[V(G)\backslash (X\cup Y)]$ by $m\geq n-|X|-|Y|\geq 4n/10$. Observe that for any two sets $U, W\subseteq V(G')$ of size $|U|, |W|\geq \frac{m}{2K}\geq \frac{n}{5K}$, there must be an edge between them since $\cA$ holds. Hence, we only need to verify that for any $U\subseteq V(G')$ of size $|U|\leq \frac{m}{2K}$ one has $|N_{G'}(U)|\geq K|U|$.

We have three cases. If $|U\cup X|\leq \frac{n}{10 K}$, then by maximality of $X$ we have $|N_G(U\cup X)\cap R|\geq K(|U|+|X|)$. Since $X$ has at most $K|X|$ neighbors in $R$ by assumption, we conclude $|N_G(U)\cap R\setminus X|\geq K|U|$ and in particular $|N_{G'}(U)|\geq K|U|$ since $R\setminus X\subseteq V(G')$.

On the other hand, if $|U\cup X|\geq \frac{n}{10K}$ and $|U|\leq \frac{n}{10K}$, there are at most $\frac{n}{10 K}$ vertices from $R$ with no neighbors in $U\cup X$, since the event $\cA$ holds. Hence, $|N_{G}(U\cup X)\cap R|\geq \frac{n}{2}-\frac{n}{10K}\geq \frac{4n}{10}$ and since $X$ has at most $K|X|\leq \frac{n}{10}$ neighbors in $R$, we find $|N_{G}(U)\cap R|\geq \frac{3n}{10}$, which is enough since $K|U|\leq \frac{n}{10}$.

Finally, if $|U|\geq \frac{n}{10K}$, then there are at most $\frac{n}{10K}$ vertices in the whole graph non-adjacent to $U$. In particular, this means that $U$ has at least $m-|U|-\frac{n}{10K}>m/2$ neighbors in $G'$, which completes the proof.
\end{proof}

\subsection{Covering small sets using Friedman-Pippenger embeddings}

In this section, we give the proof of the Lemma~\ref{lem: absorption}. To recall, this lemma states that if $(G_1, \dots, G_n)$ is a collection of graphs sampled independently from $G(n, C\log n/n)$, with the same vertex set $V=L\cup R$, then with high probability for every color-pattern $\chi:[n]\to [n]$ and every set $X\subseteq V$ of size $|X|\leq \eps n$, one can cover $X$ with a $\chi$-colored path $P$ satisfying $V(P)\subseteq L\cup X$. 

Our proof will proceed in two stages. In the first stage, we will show that with high probability, for every color-pattern $\chi:[n]\to [n]$ and every set $X\subseteq V$ of size at most $\eps n$, one can cover all but $\eps n/\log n$ vertices of $X$ using a path that alternates between $X$ and $L\backslash X$. After this stage, one is left with only $\eps n/\log n$ vertices which need to be incorporated into a path, which can be done using a Friedman-Pippenger embedding technique. The main difficulty of this technique is that we work in a colorful setting, meaning that we need to carefully redefine the notions of expansion and good embeddings which are usually used. 

Let us begin by setting up the definitions we need to make the Friedman-Pippenger machinery work and proving variants of several standard lemmas adapted to our setting. As mentioned in the introduction, a very similar setup has been considered independently by Lund and Xu \cite{lund2025embedding}. In particular, Definition 2.2, Proposition 2.3 and Lemma 2.4 in \cite{lund2025embedding} correspond to equation (\ref{eqn:goodness}), Claim~\ref{claim:indicator_identity} and Lemma~\ref{lem: backward} in our paper.

Let $(G_1, \dots G_n)$ be an $n$-tuple of graphs on the vertex set $V$, and let us refer to the edges of the graph $G_i$ as the edges of \textit{color} $i$. A pair of a vertex $v\in V$ and a color $i\in [n]$ is called a \textit{color-vertex}, and a \textit{color-set} $A\subseteq V\times [n]$ is just a set of color-vertices.

For a color-set $A$ and integers $m, D>0$, we say that the collection $(G_1, \ldots, G_n)$ is an \textit{$(m, D, A)$-color expander} if for all $S\subseteq A$ of size at most $m$, we have $|N(S)|\geq D|S|$. Here, $N(S)=\bigcup_{(u, i)\in S} N_{G_i}(u)$ stands for the union of the neighbors of $u$ in $G_i$, where $(u, i)$ ranges over $S$. 

This definition perhaps needs a bit of motivation. One should think of $A$ as the set of color-vertices which ``expand" well. This set $A$ will be defined formally as soon as the proof of Lemma~\ref{lem: absorption} starts, and once it is defined it will not change throughout the proof. Intuitively we should think of $A$ as those pairs $(u, i)$ where the vertex $u$ has many neighbors in color $i$ outside $X$. Typically, this will be almost all color vertices, so one can intuitively think of $A$ as the set of all color vertices.

Finally, in order to show that edge-colored forests can be embedded into $(G_1, \dots, G_n)$, we need the notion of a good embedding of a forest $F$. Hence, let us start from an edge-colored forest $F$, with the set of colors $[n]$. For bookkeeping purposes, we will also assume the edges of our forest are oriented, but this assumption is only technical.
We say that an injective function $\varphi:V(F)\to V$ is an \textit{embedding} if $\varphi(u)$ and $\varphi(v)$ are adjacent in the graph $G_i$, whenever $uv$ is an edge in $F$ of color $i$. For an embedding $\varphi$, we denote by $\varphi(F)$ the image of $V(F)$ under $\varphi$.

Further, this embedding is called \textit{$(m, D, A)$-good} if for every $S\subseteq A$ with $|S|\leq m$, it holds that
 \begin{align}\label{eqn:goodness}
     |N(S)\setminus \varphi(F)|\geq &\Big|S\cap \big\{(\varphi(u), i)\,|\,u\in V(F)\text{ has an in-edge of color $i$}\big\}\Big|\nonumber\\
     +&\sum_{(x, i)\in S} D-(\text{number of edges of color $i$ incident to $\varphi^{-1}(x)$ in $F$}).
 \end{align}

Finally, if we have a set of edges of $G_1, \dots, G_n$ which can be obtained as the set of edges of some forest under an $(m, D, A)$-good embedding, we will say that this set of edges is \textit{$(m, D, A)$-good}. Correspondingly, one can say that an edge-colored subgraph of $G_1, \dots, G_n$ is \textit{$(m, D, A)$-good} under the same condition.

Let us begin by showing that good embeddings can be extended.

\begin{lemma}[Extension Lemma]\label{lem: forward}
Let $F$ be an oriented edge-colored forest, with at most $mD$ vertices, maximum degree $D$, with a leaf $w$ and an edge $v\to w$ of color $\ell$, and let $K=F-w$ be the forest obtained by removing the leaf $w$ from $F$. 
Further, let $(G_1, \ldots, G_n)$ be a $(2m, 2D+1, A)$-color expander. If $(v, \ell)\in A$, then each $(m, D, A)$-good embedding of $K$ into $(G_1, \dots, G_n)$ can be extended to an $(m, D, A)$-good embedding of $F$.
\end{lemma}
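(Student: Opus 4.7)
The plan is to choose $\varphi'(w) := u$ from the candidate set $W := N_{G_\ell}(\varphi(v)) \setminus \varphi(K)$ so that the extension remains $(m,D,A)$-good. Let $B(S) := B(S, K, \varphi)$ denote the right-hand side of (\ref{eqn:goodness}), with the convention that the summand equals $D$ when $x \notin \varphi(K)$. A direct comparison (using that $w$ is the unique new vertex, $v \to w$ the unique new edge, and both carry color $\ell$) shows that, for each $S \subseteq A$ with $|S|\leq m$, replacing $\varphi$ by $\varphi'$ decreases $|N(S)\setminus\varphi(K)|$ by $[u\in N(S)]$ and decreases the right-hand side by $[(\varphi(v),\ell)\in S]$. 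Hence $\varphi'$ is $(m,D,A)$-good at $S$ unless $|N(S)\setminus \varphi(K)| = B(S)$ (call such $S$ \emph{tight}), $(\varphi(v),\ell) \notin S$, and $u \in N(S)$. Let $\mathcal{T}$ denote the family of tight $S \subseteq A$ with $|S|\leq m$ and $(\varphi(v),\ell)\notin S$; it suffices to pick $u \in W \setminus \bigcup_{S \in \mathcal{T}} N(S)$.

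The main reduction is to show that $\bigcup_{S\in\mathcal{T}} N(S) = N(\hat S)$ for a single $\hat S \in \mathcal{T}$. I would study the potential $h(S) := |N(S)\setminus\varphi(K)| - B(S)$, which is submodular ($|N(\cdot)\setminus\varphi(K)|$ is submodular and $B$ is modular). Goodness of $\varphi$ gives $h \geq 0$ on $\{|S|\leq m\}$. For $m < |S| \leq 2m$, the $(2m,2D+1,A)$-expansion together with $|\varphi(K)| < mD$ and $B(S) \leq (D+1)|S|$ yields
\begin{equation*}
   h(S) \geq (2D+1)|S| - |\varphi(K)| - (D+1)|S| > D|S| - mD \geq 0.
\end{equation*}
So $h \geq 0$ on $\{|S|\leq 2m\}$, with $h(S) = 0$ forcing $|S| \leq m$. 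If $S_1, S_2 \in \mathcal{T}$, then submodularity gives $h(S_1\cup S_2) + h(S_1\cap S_2) \leq h(S_1) + h(S_2) = 0$, and both terms on the left are nonnegative (since $|S_1\cup S_2|\leq 2m$ and $|S_1\cap S_2|\leq m$). Hence $h(S_1\cup S_2) = 0$, which forces $|S_1\cup S_2|\leq m$, so $S_1\cup S_2 \in \mathcal{T}$. Therefore $\hat S := \bigcup_{S\in\mathcal{T}} S$ itself lies in $\mathcal{T}$.

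To conclude, suppose for contradiction $W \subseteq N(\hat S)$, and set $\hat S^{**} := \hat S \cup \{(\varphi(v),\ell)\} \subseteq A$, of size at most $m+1 \leq 2m$. Since $W = N_{G_\ell}(\varphi(v))\setminus\varphi(K) \subseteq N(\hat S)$, one has $N(\hat S^{**}) \setminus \varphi(K) = N(\hat S) \setminus \varphi(K)$, whose size equals $B(\hat S)$ by tightness; yet adding $(\varphi(v),\ell)$ to $\hat S$ raises $B$ by at least $D - \deg_{K,\ell}(v) \geq 1$, since $v \to w$ contributes $1$ to $\deg_{F,\ell}(v) \leq D$, forcing $\deg_{K,\ell}(v) \leq D-1$. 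Thus $h(\hat S^{**}) \leq -1$, contradicting $h \geq 0$ on $\{|S|\leq 2m\}$. The main obstacle is precisely the union-closure of $\mathcal{T}$: the doubling of both parameters ($2m, 2D+1$ rather than $m, D+1$) in the hypothesis is essential, as it propagates nonnegativity of $h$ past the goodness threshold $|S|=m$ and thereby forces unions of tight sets to collapse back into $\mathcal{T}$.
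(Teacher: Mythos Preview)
Your proof is correct and follows essentially the same Friedman--Pippenger extension argument as the paper: you use the same submodular slack function $h=f_\varphi$, the same observation that passing from $\varphi$ to $\varphi'$ shifts the slack by $-[u\in N(S)]+[(\varphi(v),\ell)\in S]$, the same use of the $(2m,2D+1,A)$-expansion to force $h>0$ in the range $m<|S|\le 2m$, and the same final contradiction obtained by adjoining $(\varphi(v),\ell)$ to the union of tight sets. The only cosmetic difference is that the paper takes the union of specific witnessing sets $S_j$ (one per failed candidate $w_j$) and argues inductively that this union stays below size $m$, whereas you close $\mathcal{T}$ under pairwise unions directly; your organization also folds the ``$W\neq\emptyset$'' check into the final contradiction rather than treating it separately.
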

\begin{proof}
Let $\varphi$ be the $(m, D, A)$-good embedding of $K$ into $(G_1, \dots, G_n)$. We will try to construct the embedding $\psi$ of $F$ by extending $\varphi$, i.e. by defining $\varphi(w)$ to be one of the neighbors of $\varphi(v)$ in $G_\ell$. Let us denote the neighbors of $\varphi(v)$ in $G_\ell$ which are not already used by the embedding $\varphi$ by $w_1, \dots, w_k$, i.e. set $\{w_1, \dots, w_k\}=N_{G_\ell}(\varphi(v))\backslash \varphi(K)$. 

To show that the set $N_{G_\ell}(\varphi(v))\backslash \varphi(K)$ is nonempty, we recall that $\varphi$ is a good embedding, and apply the corresponding condition to the set $S=\{(v, \ell)\}$ to get $|N_{G_\ell}(\varphi(v))\backslash \varphi(K)|\geq D-(\text{number of edges of color $\ell$ incident to $v$ in $K$})\geq 1$, since $v$ has degree at most $D-1$ in $K$.

Let $\phi_j$ be the embedding of $F$ which extends $\varphi$ by defining $\psi_j(v)=w_j$, for $1\leq j\leq k$. If there is some $j$ for which this is an $(m, D, A)$-good embedding, we are done. Otherwise, for each $\psi_j$, there is some set $S_j$ violating the $(m, D, A)$-goodness condition.

To obtain the contradiction, we now introduce the key quantity in the proof, associated to every color set $S\subseteq V\times [n]$
\begin{align*}
    f_\varphi(S) = |N(S)\setminus \varphi(K)|- &\Big|S\cap \big\{(\varphi(u), i)\,|\,u\in V(K)\text{ has an in-edge of color $i$}\big\}\Big|\\
     -&\sum_{(x, i)\in S} D-(\text{number of edges of color $i$ incident to $\varphi^{-1}(x)$ in $K$}).
\end{align*}

The basic observation is that $f_\varphi(S)\geq  0$ for all $|S|\leq m$ since $\varphi$ is an $(m, D, A)$-good embedding. The second observation is that $f_\varphi$ is a submodular function.

\begin{claim}
The function $f_\varphi$ is submodular, i.e. for all $S,T\subseteq A$, it holds that $f_\varphi(S\cup T)+f_\varphi(S\cap T)\leq f_\varphi(S) + f_\varphi(T)$.
\end{claim}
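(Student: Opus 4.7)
The plan is to decompose $f_\varphi$ as a sum $f_\varphi = g_1 + g_2 + g_3$ and check submodularity for each piece separately. Specifically, set
\[
g_1(S) = |N(S)\setminus \varphi(K)|, \qquad g_2(S) = -|S\cap B|,
\]
where $B = \{(\varphi(u), i) : u\in V(K)\text{ has an in-edge of color }i\}$ is a fixed color-set, and
\[
g_3(S) = -\sum_{(x,i)\in S}\bigl(D - d(x,i)\bigr),
\]
where $d(x,i)$ counts edges of color $i$ incident to $\varphi^{-1}(x)$ in $K$ (interpreted as $0$ when $x\notin \varphi(V(K))$). These exactly recover the three summands in the definition of $f_\varphi$.

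The terms $g_2$ and $g_3$ are modular: each has the form $S\mapsto \sum_{a\in S} c_a$ for a fixed weight function on color-vertices. Modular set functions satisfy $g(S\cup T) + g(S\cap T) = g(S) + g(T)$, so they contribute nothing to the desired inequality. The only content of the claim is thus that $g_1$ is submodular, which is the standard fact that $S\mapsto |N(S)\setminus Z|$ is submodular for any fixed $Z$. The key structural input is that the neighborhood map commutes with unions, i.e.\ $N(S\cup T) = N(S)\cup N(T)$, and is monotone on intersections, i.e.\ $N(S\cap T)\subseteq N(S)\cap N(T)$; both follow directly from the definition $N(S) = \bigcup_{(u,i)\in S} N_{G_i}(u)$.

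Combining these with the identity $|A\cup B| + |A\cap B| = |A| + |B|$ applied to $A = N(S)\setminus \varphi(K)$ and $B = N(T)\setminus \varphi(K)$ then yields
\[
g_1(S\cup T) + g_1(S\cap T) \leq |(N(S)\cup N(T))\setminus \varphi(K)| + |(N(S)\cap N(T))\setminus \varphi(K)| = g_1(S) + g_1(T),
\]
and summing the three inequalities gives the claim. I do not anticipate any obstacle here: the argument is a clean decomposition, with the only mild care being to parse the definition of $f_\varphi$ and confirm that the last two terms really are linear in the indicator of $S$.
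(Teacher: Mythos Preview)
Your proposal is correct and follows essentially the same approach as the paper: both observe that the second and third summands of $f_\varphi$ are modular (being sums over elements of $S$), reducing the claim to the submodularity of $S\mapsto |N(S)\setminus \varphi(K)|$, which is then verified via $N(S\cup T)=N(S)\cup N(T)$, $N(S\cap T)\subseteq N(S)\cap N(T)$, and the inclusion--exclusion identity for set sizes.
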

\begin{proof}
Note that the second and third summand in the definition of $f_\varphi$ can be rewritten as a sum over the elements of $S$. Therefore, these two summands are modular functions and we only need to show that the summand $|N(S)\setminus \varphi(K)|$ is submodular. In other words, we need to show that $|N(S\cap T)\setminus \varphi(K)|+|N(S\cup T)\setminus \varphi(K)|\leq |N(S)\setminus \varphi(K)|+|N(T)\setminus \varphi(K)|$. Note that for each $v\in N(S\cup T)\setminus \varphi(K)$, we must have $v\in (N(S)\cup N(T))\setminus \varphi(K)$. If we also have $v\in N(S\cap T)\setminus \varphi(F)$, then we have both $v\in N(S)\setminus \varphi(K)$ and $v\in N(T)\setminus \varphi(K)$. This shows submodularity since 
\begin{align*}
    |N(S\cup T)\setminus \varphi(K)|+|N(S\cap T)\setminus \varphi(K)|&\leq |N(S)\cup N(T)\setminus \varphi(K)|+|N(S)\cap N(T)\setminus \varphi(K)|\\
    &= |N(S)\setminus \varphi(K)|+|N(T)\setminus \varphi(K)|. 
\end{align*}
\claimproofend 

Having established submodularity of $f_\varphi$, we are ready to proceed with the proof. Our next goal is to understand how $f_\varphi$ changes when $\varphi$ is extended to $\psi_j$. 

\begin{claim}\label{claim:indicator_identity}
For each $S\subseteq A$, we have $f_{\psi_j}(S)=f_\varphi(S)+\mathbf{1}_{w_j\in N(S)}-\mathbf{1}_{(\varphi(v),\ell)\in S}$, where $\mathbf{1}_E$ denotes the indicator function for the condition $E$.
\end{claim}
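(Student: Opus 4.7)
The plan is to establish the equality by a term-by-term bookkeeping argument: I would compute the contribution of each of the three summands in the definition of $f$ separately, and verify that the unwanted pieces cancel. The key observation is that $\psi_j$ differs from $\varphi$ in a very controlled way: the image gains exactly the single vertex $w_j=\psi_j(w)$ (using crucially that $w_j\notin\varphi(K)$, which holds by the choice of the $w_j$'s), and the forest $F$ has exactly one edge not already in $K$, namely $v\to w$ of color $\ell$.

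Writing $f_\bullet(S)=A_\bullet-B_\bullet-C_\bullet$ for the three summands in the order they appear, I would evaluate each of the three differences in turn. For $A=|N(S)\setminus\varphi(\cdot)|$: one has $\psi_j(F)=\varphi(K)\cup\{w_j\}$, so the change in $A$ equals $-\mathbf{1}_{w_j\in N(S)}$. For $B$: the set of pairs $(\psi_j(u),i)$ with $u$ having an in-edge of color $i$ differs from the corresponding set for $\varphi$ and $K$ by exactly one new pair, $(w_j,\ell)$, contributed by the unique new in-edge of $F$; hence $B$ increases by $\mathbf{1}_{(w_j,\ell)\in S}$. For $C$: exactly two summands change, each decreasing by $1$. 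The summand at $(\varphi(v),\ell)$ decreases because $v$ acquires one extra incident edge of color $\ell$ in $F$; the summand at $(w_j,\ell)$ decreases because the preimage of $w_j$ changes from being undefined (count $0$, by convention) under $\varphi$ to being $w$ (count $1$) under $\psi_j$.

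Summing with the signs in $f=A-B-C$, the two $(w_j,\ell)$ indicator contributions from $B$ and $C$ cancel exactly, leaving precisely the two indicator terms that appear in the claim. The only mildly delicate point is the convention for evaluating the summand of $C$ at pairs $(x,i)$ whose first coordinate lies outside the image of $\varphi$; with the natural convention that the edge-count is $0$ in that case, the verification reduces to a routine one-line calculation, and no deeper obstacle arises. I do not anticipate a substantial difficulty here: once the bookkeeping conventions are fixed the proof is purely mechanical, and all the real work of the Extension Lemma is deferred to the use of submodularity together with the claim in the steps that follow.
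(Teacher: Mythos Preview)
Your approach is correct and is exactly what the paper does: compute the change in each of the three summands of $f$ separately and observe that the two $(w_j,\ell)$-indicators cancel. One caveat worth flagging is that your (correct) bookkeeping actually yields $f_{\psi_j}(S)=f_\varphi(S)-\mathbf{1}_{w_j\in N(S)}+\mathbf{1}_{(\varphi(v),\ell)\in S}$, with the signs opposite to those displayed in the claim; this is a typo in the paper, and it is the sign-corrected identity that is actually used in the argument immediately following the claim.
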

\begin{proof}
Let us consider how the three summands in the definition of $f_\varphi$ change when $\varphi$ is extended. First, we have that for each $S\subseteq A$
    \begin{align*}
        |N(S)\setminus \varphi(K)| = |N(S)\setminus \psi_j(F)|+\mathbf{1}_{w_j\in N(S)}.
    \end{align*}
    Also, we observe
    \begin{align*}
        \Big|S\cap \big\{(\varphi(u), i)\,|\,u\in &V(K)\text{ has an in-edge of color $i$}\big\}\Big| \\
        &= \Big|S\cap \big\{(\psi_j(u), i)\,|\,u\in V(F)\text{ has an in-edge of color $i$}\big\}\Big|-\mathbf{1}_{(w_j,\ell)\in S}.
    \end{align*}
    Finally, the third summand is changed as follows
    \begin{align*}
        \sum_{(x,i)\in S}&D-(\text{number of edges of color $i$ incident to $\varphi^{-1}(x)$ in $K$}) \\
        &= \sum_{(x,i)\in S} D-(\text{number of edges of color $i$ incident to $\psi_j^{-1}(x)$ in $F$}) + \mathbf{1}_{(\varphi(v),\ell)\in S}+\mathbf{1}_{(w_j,\ell)\in S}.
    \end{align*}
Summing these three equations gives the proof of the claim.
\claimproofend
    
Let us now recall what we were doing in the proof. We assumed that none of the embeddings $\psi_j$ were an $(m,D,A)$-good embedding, which meant that we had sets $S_j\subseteq A$ witnessing that $\psi_j$ is not good, i.e. having $f_{\psi_j}(S_j)<0$. Since $f_{\psi_j}(S_j)=f_\varphi(S_j)+\mathbf{1}_{w_j\in N(S_j)}-\mathbf{1}_{(\varphi(v),\ell)\in S_j}$ and $f_\varphi(S_j)\geq 0$, it follows that $w_j\in N(S_j)$, $(\varphi(v),\ell)\notin S_j$ and $f_\varphi(S_j)= 0$.

The final observation is that for any $S\subseteq A$ of size $m\leq |S|\leq 2m$, we must have $f_\varphi(S)> 0$, since $(G_1, \dots, G_n)$ is a $(2m, 2D+1, A)$-color-expander implying that $|N(S)\setminus \varphi(K)| > (2D+1)|S|-mD$ and 
\begin{align*}
    \Big|S\cap \big\{(\varphi(u), i)\,|\,u\in V(K)\text{ has an in-edge of color $i$}\big\}\Big|&\leq |S|,\\
    \sum_{(x,i)\in S}D-(\text{number of edges of color $i$ incident to $\varphi^{-1}(x)$ in $F$})&\leq D|S|.
\end{align*}
This gives us $|N(S)\setminus \varphi(V(K))| > (2D+1)|S|-mD\geq (D+1) |S|$ and so $f_\varphi(S)> 0$.

Consider now the union $T_j=\bigcup_{t=1}^j S_t$. By submodularity of $f_\varphi$, we must have $f_\varphi(T_j)\leq 0$. Moreover, by induction on $j$ it is not hard to show $|T_j|< m$, since if we ever had $m\leq |T_j|\leq 2m$ we would get $f_\varphi(T_j)>0$, leading to a contradiction. Hence, the union of all sets $S_1, \dots, S_k$, denoted by $T_k$, has $f_\varphi(T_k)=0$ and $|T_k|< m$. But we showed that $\{w_1,\ldots w_k\}\subseteq N(T_k)$ and $(\varphi(v),\ell)\notin T_k$. It follows that $f_\varphi(T_k\cup \{(\varphi(v),\ell)\})<0$, contradicting that $\varphi$ is an $(m,D,A)$-good embedding. This completes the proof.
\end{proof}

The next lemma shows that $\varphi$ is a good embedding of a forest $F$, removing one leaf from this forest will leave this embedding good. 

\begin{lemma}[Rollback lemma]\label{lem: backward}
Let $F$ be an oriented edge-colored forest, with a leaf $w$ and an edge $v\to w$ of color $\ell$, and let $K=F-w$ be the forest $F$ with the leaf $w$ removed. Also, let $(G_1, \dots, G_n)$ be an $n$-tuple of graphs on the vertex set $V$.
Then, for every $(m, D, A)$-good embedding $\varphi:V(F)\to V$, the restriction $\varphi|_{K}:V(K)\to V$ is also an $(m, D, A)$-good embedding.
\end{lemma}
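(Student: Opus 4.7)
The plan is to show that $f_{\varphi|_K}(S) \geq f_\varphi(S)$ for every $S \subseteq A$, where $f_\varphi$ denotes the auxiliary function introduced in the proof of the Extension Lemma. Recall that $\varphi$ is an $(m,D,A)$-good embedding of a forest precisely when $f_\varphi(S) \geq 0$ for all $|S| \leq m$, so once this monotonicity is established, the conclusion is immediate from the hypothesis that $\varphi$ is $(m,D,A)$-good on $F$. The whole proof therefore reduces to a bookkeeping exercise: track how each of the three summands in the definition of $f$ changes when one passes from the embedding $\varphi$ of $F$ to the restriction $\varphi|_K$ of $K = F - w$.

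Concretely, I would redo the computation that already appears in the second claim of the Extension Lemma, but now in reverse. The first summand $|N(S) \setminus \mathrm{image}|$ gains $\mathbf{1}_{\varphi(w)\in N(S)}$, since $\varphi(w)$ is removed from the image. The second summand (the in-edge indicator count) shrinks by $\mathbf{1}_{(\varphi(w),\ell)\in S}$, because $w$ was a leaf whose unique incident edge was the in-edge $v\to w$ of color $\ell$, and $\varphi$ is injective so only $w$ mapped to $\varphi(w)$. The third summand changes because the number of edges of color $\ell$ incident to $v$ drops by one and $w$ is removed from the vertex set altogether; using the convention that $\varphi^{-1}(x)$ contributes degree $0$ when undefined, these together contribute $-\mathbf{1}_{(\varphi(v),\ell)\in S} - \mathbf{1}_{(\varphi(w),\ell)\in S}$ to $f_{\varphi|_K}(S) - f_\varphi(S)$. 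After the two $\mathbf{1}_{(\varphi(w),\ell)\in S}$ terms cancel, one obtains the clean identity
\[
f_{\varphi|_K}(S) \;=\; f_\varphi(S) + \mathbf{1}_{\varphi(w)\in N(S)} - \mathbf{1}_{(\varphi(v),\ell)\in S},
\]
which is exactly the same formula as in the Extension Lemma, just read in the opposite direction.

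The final step is to check that this correction is non-negative. Since $v\to w$ is an edge of color $\ell$ in $F$ and $\varphi$ is an embedding, $\varphi(v)\varphi(w)$ is an edge of $G_\ell$, so $\varphi(w) \in N_{G_\ell}(\varphi(v))$. Whenever $(\varphi(v),\ell) \in S$, this forces $\varphi(w) \in N(S)$, and hence $\mathbf{1}_{\varphi(w)\in N(S)} \geq \mathbf{1}_{(\varphi(v),\ell)\in S}$. Consequently $f_{\varphi|_K}(S) \geq f_\varphi(S) \geq 0$ for every $S \subseteq A$ with $|S| \leq m$, which is exactly the condition that $\varphi|_K$ is an $(m,D,A)$-good embedding of $K$. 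There is no real obstacle here; the only thing requiring care is the sign bookkeeping across the three summands, and this mirrors the analogous computation already carried out for the Extension Lemma.
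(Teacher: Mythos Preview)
Your proof is correct and follows essentially the same approach as the paper. The paper's proof tracks the same three summands and arrives at the same key inequality $\mathbf{1}_{\varphi(w)\in N(S)}\geq \mathbf{1}_{(\varphi(v),\ell)\in S}$, justified in the same way; the only cosmetic difference is that the paper works directly with the left- and right-hand sides of the goodness inequality rather than packaging them into the function $f_\varphi$.
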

\begin{proof}
Our goal is to verify the $(m, D, A)$-goodness condition for the embedding $\varphi|_K$ by checking the inequality for each color-set $S\subseteq A$ with $|S|\leq m$. First of all, we have
\begin{align*}
     |N(S)\setminus \varphi|_K(K)| = |N(S)\setminus \varphi(F)| + \mathbf{1}_{\varphi(w)\in N(S)}.
\end{align*}
Then, we also have 
\begin{align*}
    \Big|S\cap \big\{(\varphi(u), i)\,|\,u\in &V(K)\text{ has an in-edge of color $i$}\big\}\Big| \\
    = \Big|S\cap &\big\{(\varphi(u), i)\,|\,u\in V(F)\text{ has an in-edge of color $i$}\big\}\Big|-\mathbf{1}_{(\varphi(w),\ell)\in S}.
\end{align*}
Finally, we have
    \begin{align*}
\sum_{(x,i)\in S} &D-(\text{number of edges of color $i$ incident to $\varphi|_K^{-1}(x)$ in $K$})\\= &\sum_{(x,i)\in S} D-(\text{number of edges of color $i$ incident to $\varphi^{-1}(x)$ in $F$}) + \mathbf{1}_{(\varphi(v),\ell)\in S} + \mathbf{1}_{(\varphi(w),\ell)\in S}.
    \end{align*}
    
Since the embedding $\varphi$ is $(m,D, A)$-good, we have 
 \begin{align*}
     |N(S)\setminus \varphi(F)|\geq &\Big|S\cap \big\{(\varphi(u), i)\,|\,u\in V(F)\text{ has an incoming edge of color $i$}\big\}\Big|\\
     +&\sum_{(x, i)\in S} D-(\text{number of edges of color $i$ incident to $\varphi^{-1}(x)$ in $F$}).
 \end{align*}
When we restrict $\varphi$ to $\varphi|_K$, the left-hand side increases by $\mathbf{1}_{\varphi(w)\in N(S)}$, while the right-hand side increases by $\mathbf{1}_{(\varphi(v),\ell)\in S}$. Hence, to establish the corresponding inequality for $\varphi|_K$, it suffices to show $\mathbf{1}_{\varphi(w)\in N(S)}\geq \mathbf{1}_{(\varphi(v),\ell)\in S}$, which is true since $\varphi(w)$ is a neighbor of $\varphi(v)$ in color $\ell$, due to $\varphi$ being an embedding.
\end{proof}

We have now established all the results we need about good embeddings, and it is time to use them to proof of Lemma~\ref{lem: absorption}.

The key object in the proof will be a star-system. A \textit{star-system} $M$ is a union of monochromatic stars and an associated color for each star (which may be different from the color of the edges of the star). More precisely we have the set of centers of stars, denoted by $U_M$, together with two assigned colors for each vertex $u\in U_M$ denoted by $i_u, j_u$, and a star of size $\log n$ centered at $u$ in the graph $G_{i_u}$. We refer to the color $j_u$ as the color assigned to this star, and note that we may have $i_u\neq j_u$. We denote the neighbors of $u$ in this star by $N_u$, and we assume that the sets $N_u, N_v$ are disjoint for $u\neq v$, and also disjoint from $U_M$. The \textit{size} of a star-system $M$ is simply the size of $U_M$ and the set of its vertices is $V(M)=U_M\cup_{u\in U_M} N_u$.

\begin{proof}[Proof of Lemma~\ref{lem: absorption}.]
The first step of the proof is to state three expansion properties of the random graphs that we need, and show these properties occur with high probability. After this, the proof is completed deterministically. Here are the properties we will need.

\begin{enumerate}[label=(\Alph*)]
       \item\label{itm: DFS-expansion} Let $M$ be a star-system, let $Y$ be a set disjoint from $V(M)$, and assume $|U_M|, |Y|\geq \eps n/\log n$. Then, there is some vertex $u\in U_M$ for which there is an edge of color $j_u$ between $N_u$ and $Y$.
       \item\label{itm: non-expanding set} For every $X\subseteq V$ with $|X|\leq \varepsilon n$ and color-set $W$ such that $\big|\big(V(W)\cup N(W)\big)\cap (L \setminus X)\big|\leq n/3$, it holds that $|W|\leq \varepsilon n/\log n$.
       \item\label{itm:joined graph} For any $i\in [n]$ and any sets $S, T\subseteq V$ of size $|S|=n/2^7, |T|=n/2^7$, there exists an edge between $S$ and $T$ in the graph $G_i$.
\end{enumerate}

\noindent
\textbf{Showing that (A) holds with high probability.} To show (A) holds with high probability, we want to apply a union bound. Therefore, let us fix disjoint sets $U$ and $Y$ of size $\eps n/\log n$, as well as disjoint set $N_u$, for each $u\in U_M$, of size $\log n$ so that all chosen sets are disjoint. Also, choose two colors $i_u$ and $j_u$ for each $u\in U_M$. There are at most $Z\leq \left(n^3 \binom{n}{\log n}\right)^{\eps n/\log n}n^{\eps n/\log n}$ ways to make all of these choices. We can simplify this bound as follows
\begin{align*}
    Z\leq n^{4\eps n/\log n} \left(\frac{en}{\log n}\right)^{\log n\cdot \frac{\eps n}{\log n}} \leq e^{5\eps n} \left(\frac{n}{\log n}\right)^{\eps n}.
\end{align*}

The probability that (A) is violated can therefore be bounded by summing the probabilities that $U$ and the sets $N_u$, chosen as above, indeed form a star-system and that no set $N_u$ has an edge to $Y$ in color $j_u$. The probability that $U$ and the sets $N_u$ form a star-system is $p^{\log n \cdot |U|}=p^{\eps n}$, since we need an edge in $G_{i_u}$ between $u$ and each vertex of $N_u$, for all $u\in U$. The probability there are no edges from $N_u$ to $Y$ in color $j_u$ for any $u$ is $(1-p)^{\log n\cdot |Y|\cdot |U|}\leq \exp(-p \eps^2 n^2/\log n)$. Putting all of this together, we have
\begin{align*}
\Pr[\text{(A) is violated}] &\leq e^{5\eps n}\left(\frac{n}{\log n}\right)^{\eps n} p^{\eps n}e^{-p \eps^2 n^2/\log n}\\
&\leq e^{5\eps n}\left(\frac{n}{\log n}\right)^{\eps n}\left(\frac{C\log n}{n}\right)^{\eps n} e^{-\eps^2 Cn}\\
&=e^{5\eps n}C^{\eps n}e^{-\eps^2 Cn}\to 0,
\end{align*}
as long as $C$ is large enough compared to $\eps$.

\medskip
\noindent
\textbf{Showing that (B) and (C) hold with high probability.}
We show that, with high probability, for every color-set $W\subseteq V\times [n]$ of size $\eps n/\log n$ and every set $U\subseteq V$ of size at least $n/2^7$, $N(W)$ intersects $U$. 

Note that this property implies both (B) and (C). If (B) was violated for some $X$ and $W$ with $|W|\geq \eps n/\log n$, we can set $U=L\backslash (X\cup V(W)\cup N(W))$ and observe that $|U|=|L| - |X| - \big|\big(V(W)\cup N(W)\big)\cap (L \setminus X)\big|\geq n/2-\eps n-n/3\geq n/2^7$. Since $N(W)\cap U=\varnothing$, this would violate the above property, thus showing (B) holds. To derive (C), we can set $W=\{(v, i):v\in S\}$ and $U=T$, and note that $N(W)\cap T\neq \varnothing$ implies that there is an edge in $G_i$ between $S$ and $T$.

Let us show that this new property holds with high probability. Fix such $W\subseteq V\times [n]$ with $|W|\geq \eps n/\log n$, $U\subseteq V$ with $|U|\geq \eps n$, and let $U'=U\setminus V(W)$. Note that $|U'|\geq \eps n/2$, as long as $\log n\geq 2$. Then, the probability that $N(W)$ does not intersect $U'$ can be bounded by $(1-p)^{|W||U'|}\leq e^{-p|W||U'|}$, since for each $u\in U', (w, i)\in W$, the edge $uw$ must not appear in $G_i$, which happes with probability $1-p$. By a union bound over all choices of $W$ and $U$, we get
\begin{align*}
    \Pr\big[\text{there are } U, W \text{ with }|U|\geq \eps n, |W|\geq \frac{\eps n}{\log n}\text{ s.t. }N(W)\cap U=\varnothing\big] &\leq \binom{n^2}{\eps n/\log n}\binom{n}{\eps n}\cdot e^{-p|W||U'|}.
\end{align*}
Using the usual bounds on the binomial coefficients, $\binom{n^2}{\eps n/\log n}\leq n^{2\cdot \eps n/\log n}\leq e^{2\eps n}$ and $\binom{n}{\eps n}\leq \big(\frac{en}{\eps n}\big)^{\eps n}\leq (e/\eps)^{\eps n}$, we get
\[\binom{n^2}{\eps n/\log n}\binom{n}{\eps n}
\cdot e^{-p|W||U'|}\leq e^{2\eps n}\left(e/\eps\right)^{\eps n}\cdot e^{-\eps^2 Cn/2}\to 0,\]
as long as $C$ is large enough compared to $\eps$.

\medskip
\noindent
\textbf{Setting up the deterministic part of the proof.} From now on, we will assume that the properties (A), (B) and (C) hold for $(G_1, \dots, G_n)$. Our covering procedure will have two steps. The first step will be to economically cover most of $X$ using a path which alternates between $L\backslash X$ and $X$, leaving at most $3\eps n/\log n$ vertices of $X$ uncovered. At this point, we will switch to the Friedman-Pippenger embedding, which will allow us to cover the rest of the vertices in a less economical way, which we can tolerate because we have to embed only a very small number of remaining vertices.

Let us fix a map $\chi:[n]\to [n]$ and a set $X\subseteq V$ we would like to cover with $|X|\leq \eps n$. Recall that the goal is to find a $\chi$-colored path in $(G_1, \dots, G_n)$ covering $X$, with endpoints in $L\backslash X$.

Since our path is required to use vertices of $X\cup L$, let us restrict our focus to $V'=X\cup L$ and set $G_i' = G_i[V']$. Also, in order to guarantee the expansion properties needed for the Friedman-Pippenger embedding, we need to remove the set of non-expanding color-vertex pairs from $V'$. To this end, define $m=n/20\log n$ and $D=\log n+1$, where the choice of $m$ and $D$ is guided by the requirements of Lemma~\ref{lem: forward}. We will use Lemma~\ref{lem: forward} on forests with maximum degree $D$ and the size of these forests will never exceed $n/20\leq mD$. Let $W\subseteq V'\times [n]$ be a maximal color-set such that $|(V(W)\cup N(W))\cap (L \setminus X)|\leq \min\{3D|W|,n/3\}$. By \ref{itm: non-expanding set}, $|W|\leq\eps n/\log n$ and hence, $|(V(W)\cup N(W))\cap (L \setminus X)|\leq 3D|W|\leq 3\eps n$. 

The goal of defining such $W$ is that we can apply the Friedman-Pippenger machinery with the ground set $A= (V'\times [n])\setminus W$, which has good expansion properties. In the economical covering part, we will strive to cover most of $X\backslash V(W)$, and therefore we set $L' = L\setminus (X\cup V(W))$, $X' = X\setminus V(W)$. 

Let $F$ be an empty graph on $|X\cup V(W)|$ vertices and let $\varphi$ be a bijection between $V(F)$ and $X\cup V(W)$. Now, we show that $\varphi$ is an $(m, D, A)$-good embedding  as well as that $(G'_1,\ldots,G'_n)$ is a $(2m,2D+1,A)$-color expander. 

Let us first verify the expansion property for a color-set $S\subseteq A$ of size at most $2m$. By the maximality of $W$, we have $|(V(S\cup W)\cup N(S\cup W))\cap (L \setminus X)|> \min\{3D(|S|+|W|),n/3\}$. Using that $|(V(W)\cup N(W))\cap (L \setminus X)|\leq 3D|W|$, we get 
\begin{align*}
    |N(S)\cap L'|> \min\{3D(|S|+|W|),n/3\}-3D|W|-|S|\geq \min\{(2D+1) |S|, n/5\}.
\end{align*}
To see the last inequality, one needs to analyze the two cases, depending on what the minimum on the left-hand side is. Firstly, if the minimum is $3D(|S|+|W|)$, then we have $3D(|S|+|W|)-3D|W|-|S|\geq (2D+1) |S|$, and otherwise $n/3-3D|W|-|S|\geq n/3-3\eps n-n/20\log n\geq n/5$. Hence, if $|S|\leq 2m$ then $|N(S)\cap L'|\geq (2D+1)|S|$, which shows that $(G'_1,\ldots,G'_n)$ is a $(2m,2D+1,A)$-color expander. 

On the other hand, to verify that $\varphi$ is an $(m,D,A)$-good embedding, we need to check that $|(N(S)\cap A)\backslash \varphi(F)|\geq \sum_{(x, i)\in S} D$ for all $S\subseteq A$ with $|S|\leq m$, where we note that other terms from (\ref{eqn:goodness}) do not appear since $F$ has no edges. However, we already know that $|(N(S)\cap A)\backslash \varphi(F)|\geq |N(S)\cap L'|\geq (2D+1) |S|\geq D|S|$ for all $|S|\leq m$. This shows that $\varphi$ is $(m, D, A)$-good.

\medskip
\noindent
\textbf{Economically covering most of $X$.} If $|X'|\leq 2\eps n/\log n$, we already have very few vertices in $X$ and we do not need to perform the economical covering step, allowing us to just pass to the Friedmann-Pippenger embedding step. So, in this step we assume that $|X'|>2\eps n/\log n$.

We say that a path $P=v_1 \dots v_\ell$ is \textit{alternating} if $v_1\in L'\backslash X', v_2\in X', v_3\in L'\backslash X', \dots, v_\ell\in X'$ and $v_iv_{i+1}$ is of color $\chi(i)$ for all $i\leq \ell-1$. Note that $\ell$ must be even for alternating paths $P$. 

Let us now consider pairs $(P, M)$, where $P$ is an alternating path and $M$ is a star-system disjoint from $P$ of size at most $\eps n/\log n$, with the following properties:
     \begin{itemize}
        \item For each $u\in U_M$, $N_u\subseteq L'$ and $G_{j_u}'$ contains no edges between $N_u$ and $X'\setminus (V(P)\cup V(M))$;
        \item Consider the following forest $F_{P, M}$ which extends $F$. Add an out-edge of color $\chi(1)$ from $\varphi^{-1}(v_2)$ to a new vertex, and for each vertex $v$ of $F$ with $\varphi(v)=v_{2k}$ and $2k<\ell$, add an out-edge from $v$ to a new vertex in color $\chi(2k)$. Also, for each vertex $v$ of $F$ with $\varphi(v)=u\in U_M$, add $\log n$ out-edges from $v$ to new vertices in color $i_u$. 
        Then, we require that there exists an $(m,D,A)$-good embedding $\varphi_{P, M}$ of $F_{P, M}$ extending $\varphi$, where, for each even index $2k<\ell$, the new out-neighbor of $\varphi^{-1}(v_{2k})$ gets mapped to $v_{2k+1}$, and for each $u\in U_M$, $\varphi_{P, M}$ is a bijection between the new vertices of the star attached at $\varphi^{-1}(u)$ and $N_u$.
     \end{itemize}

     Note that at least some pairs $(P, M)$ satisfy these properties (e.g. if both $M$ and $P$ are empty).
     
    Let $(P,M)$ be a pair satisfying the above properties maximizing the size of $M$ and (with respect to the size of $M$ being maximal) the size of $P$. Note that in this maximizing pair, $P$ is not an empty path, since otherwise we could extend it as follows. Pick a vertex of $v_2\in X'\backslash V(M)$ such that $(v_2, \chi(1))\notin W$. This can be done since $X'\backslash V(M)$ is nonempty and $X'$ does not intersect $V(W)$. Then, add an edge from $\varphi^{-1}(v_2)$ to a new vertex of color $\chi(1)$ and use Lemma~\ref{lem: forward} to extend $\varphi$ by embedding the new edge, and denote its end by $v_1$. This shows that $P$ can always be extended to contain at least one edge.
    
    Furthermore, we will show that the size of $M$ is limited by the requirement $|M|\leq \eps n/\log n$, as the following claim shows.
    
     \begin{claim}
         $|M|=\eps n/\log n$.
     \end{claim}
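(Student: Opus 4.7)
The plan is to argue by contradiction: assume $|M| < \eps n/\log n$, and construct a valid pair that strictly beats $(P,M)$ in the lexicographic order (maximize $|M|$ first, then $|P|$). The strategy splits into two cases: either extend $P$ by two vertices, or if that fails, roll back $P$ by two and add a new star to $M$ centered at the rolled-off endpoint.

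Let $v_\ell$ be the current endpoint of $P$; we have $v_\ell\in X'$ and $(v_\ell,\chi(\ell))\in A$, so Lemma~\ref{lem: forward} applies at $\varphi^{-1}(v_\ell)$ in color $\chi(\ell)$. In the extension case, I would look for $w\in L'$ and $x\in X'$, both outside $V(P)\cup V(M)$, with $v_\ell w\in E(G'_{\chi(\ell)})$, $wx\in E(G'_{\chi(\ell+1)})$, and such that attaching $w$ as a new leaf at $\varphi^{-1}(v_\ell)$ of color $\chi(\ell)$ in $F_{P,M}$ preserves $(m,D,A)$-goodness. If such $(w,x)$ exist, the extended pair $(P'',M)$ with $v_{\ell+1}=w,\,v_{\ell+2}=x$ has $|P''|=|P|+2$, contradicting the maximality of $|P|$.

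Otherwise, I would roll back $P$ to $P'=v_1\dots v_{\ell-2}$; two applications of Lemma~\ref{lem: backward} show that the restriction of $\varphi_{P,M}$ is an $(m,D,A)$-good embedding of $F_{P',M}$. I would then add a new star to $M$ centered at $v_\ell$ (now outside $V(P')$), with $i_u=\chi(\ell)$, $j_u=\chi(\ell+1)$, and $\log n$ leaves $N_u=\{w_1,\dots,w_{\log n}\}$ built by iterating Lemma~\ref{lem: forward} $\log n$ times on the rolled-back forest to attach new leaves at $\varphi^{-1}(v_\ell)$ of color $\chi(\ell)$. Since $v_\ell$ starts at degree $0$ in $F_{P',M}$ and the added leaves bring its degree to $\log n = D-1 < D$, every invocation of the lemma is legal. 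The resulting pair $(P', M')$ with $M'=M\cup\{(v_\ell, i_u, j_u, N_u)\}$ satisfies $|M'|=|M|+1\leq\eps n/\log n$, and contradicts the maximality of $|M|$, provided all the star-system constraints are met.

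The main obstacle is verifying the no-edge condition on the new star: no $w_i$ may have an edge in $G'_{\chi(\ell+1)}$ to a vertex of $X'\setminus(V(P')\cup V(M'))$. Suppose some $w_i$ did, say to $x$. A direct bookkeeping gives $X'\setminus(V(P')\cup V(M'))=X'\setminus(V(P)\cup V(M))$, since the symmetric difference of the two exclusion sets consists only of $v_{\ell-1}$ and $N_u$ (which lie in $L'$, so contribute nothing to $X'\setminus\ldots$) together with $v_\ell$ (which lies in $V(P)\cap V(M')$ and so is excluded from both sides); hence $x \in X'\setminus(V(P)\cup V(M))$. The remaining step is transferring the ``valid leaf'' status of $w_i$ from $F_{P',M}$ to $F_{P,M}$, so that $(w,x)=(w_i,x)$ would have served as a valid extension of $P$ in Case 1, a contradiction. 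This transfer exploits the fact that the two forests differ only by a single leaf attached at $\varphi^{-1}(v_{\ell-2})$ that is forest-disjoint from $\varphi^{-1}(v_\ell)$; the small perturbation it induces in the goodness inequality at $S\subseteq A$ can be absorbed using the submodular machinery behind Lemma~\ref{lem: forward}. Making this transfer fully rigorous is the most delicate technical step.
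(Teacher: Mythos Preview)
Your overall contradiction strategy matches the paper's, but the order in which you perform the operations creates a real gap that the paper's proof avoids by design.

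You first test Case~1 with a \emph{single} good leaf $w$ at $v_\ell$, and only after Case~1 fails do you roll back $v_{\ell-1}$ and then grow the star $N_u=\{w_1,\dots,w_{\log n}\}$ from the rolled-back forest $F_{P',M}$. To verify the no-edge condition on this new star, you need to rule out any $\chi(\ell+1)$-edge from some $w_i$ to $x\in X'\setminus(V(P)\cup V(M))$, and for this you want to feed $(w_i,x)$ back into Case~1. But Case~1 demands that attaching $w_i$ to $F_{P,M}$ (with $v_{\ell-1}$ still present) yields a good embedding, whereas you only know $w_i$ is a valid leaf for the \emph{smaller} forest $F_{P',M}$. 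Lemma~\ref{lem: backward} transfers goodness from larger forests to smaller ones, not the other way; and Lemma~\ref{lem: forward} guarantees only that \emph{some} leaf can be attached, not that your particular $w_i$ can. Your appeal to the submodular machinery is asking, in effect, for a version of the extension lemma that certifies a prescribed leaf, which is not available. I do not see a short repair along these lines.

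The paper avoids the issue by reversing the order: it first attaches the full $\log n$-leaf star $N$ at $v_\ell$ \emph{on top of} $F_{P,M}$, so that $v_{\ell-1}$ and all of $N$ coexist in a single good embedding of $F'_{P,M}$, and only then splits into cases based on whether some $u\in N$ has a $\chi(\ell+1)$-edge into $X'\setminus(V(P)\cup V(M))$. If yes, roll back the other $\log n-1$ star leaves to obtain a good embedding of $F_{P,M}+u=F_{Q,M}$, extending $P$. If no, the star already satisfies the no-edge condition by construction; roll back the single leaf $v_{\ell-1}$ to obtain a good embedding of $F_{P',M}+N=F_{P',M'}$, enlarging $M$. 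In both branches only the rollback direction is used, which is precisely what Lemma~\ref{lem: backward} provides, so no ``transfer'' step is ever needed. (Incidentally, only one application of Lemma~\ref{lem: backward} is required to pass from $F_{P,M}$ to $F_{P',M}$, since only the leaf $v_{\ell-1}$ is removed from the forest; $v_\ell$ stays as a vertex of $F$.)
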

     \begin{proof}
         Suppose towards a contradiction that $|M|<\eps n/\log n$, and let us denote the length of $P$ by $\ell-1$. Let $\varphi_{P, M}$ be the embedding of the corresponding forest $F_{P, M}$ and note that $|V(F_{P, M})|\leq |(X\cup V(W))| + |M|\cdot \log n + |V(P)|\leq 2\eps n+\eps  n+2\eps n\leq 5\eps n$. Let $F_{P, M}'\supseteq F_{P, M}$ be obtained by attaching $\log n$ out-edges of color $\chi(\ell)$ to $\varphi^{-1}(v_\ell)$. 
         
         By \cref{lem: forward}, there exists an $(m, D, A)$-good embedding $\varphi_{P, M}'$ of $F_{P, M}'$ extending $\varphi'_{P, M}$. Let $N$ denote the set of vertices to which the new vertices of $F_{P, M}'$ get embedded to by $\varphi_{P, M}'$. Note that we have $N\subseteq L'$, since all vertices of $X\cup V(W)$ are already in the image of $\varphi_{P, M}$, and $\varphi_{P, M}'$ must be injective.
         
         If $G_{\chi(\ell+1)}'$ contains an edge $uv$ with $u\in N$ and $v\in X'\setminus (V(P)\cup V(M))$, we want to get a contradiction by extending the path $P$ and not altering $M$. We can define $v_{\ell+1}=u, v_{\ell+2}=v$, thus obtaining a longer path $Q$ and make no changes to $M$. We need to verify that $(Q, M)$ still satisfy the relevant properties. The first one is immediate, since $M$ is unchanged and the set $X'\backslash (V(P)\cup V(M))$ shrinks. To check the second property, note that $F_{Q, M}$ is a subgraph of $F_{P, M}'$ and therefore it can be obtained from it by removing a number of leaves attached to $\varphi_{P, M}^{-1} (v_\ell)$ (to be specific, one needs to remove the leaves corresponding to $\varphi_{P, M}'^{-1}(N\backslash\{v_{\ell+1}\})$). Thus, we can use \cref{lem: backward} to remove, one by one, all leaves attached to $\varphi_{P,  M}^{-1}(v_\ell)$ except $\varphi_{P, M}'^{-1}(v_{\ell+1})$. This gives an embedding $\varphi_{Q, M}$ verifying the second condition and gives a contradiction to the maximality of $(P, M)$.
         
        So, suppose that $G_{\chi(\ell+1)}'$ does not contain an edge between $N$ and $X'\setminus (V(P)\cup V(M))$. Let us shorten the path $P$ by removing the last two vertices, i.e. define a new path $Q=v_1\ldots v_{\ell-2}$, and extend $M$ by adding to $U_M$ the vertex $v_\ell$, together with colors $i_{v_\ell}=\chi(\ell)$, $j_{v_\ell}=\chi(\ell+1)$ and the neighborhood $N$. If the pair $(Q, M')$ satisfies the two properties, it would constitute a contradiction to maximality of $(P, M)$, where we are using the assumption $|M|< \eps n/\log n$, and therefore $|M'|\leq \eps n/\log n$.
        
        Note that the first property is still satisfied, precisely because $G_{\chi(\ell+1)}'$ does not contain an edge between $N$ and $X'\setminus (V(P)\cup V(M))=X'\setminus (V(Q)\cup V(M'))$. On the other hand, the second property can be verified using \cref{lem: backward} to remove $\varphi_{P, M}'^{-1}(v_{\ell-1})$ from the embedding $\varphi_{P, M}'$ and thus obtain the embedding $\varphi_{Q, M'}$. Hence, we get a contradiction to our original assumption that $|M|< \eps n/\log n$.\claimproofend

Hence, we have $|M|=\eps n/\log n$. Since, for every $u\in U_M$, there are no edges of color $j_u$ from $N_u$ to $X'\backslash (V(P)\cup V(M))$, we conclude that the number of vertices of $X$ not covered by $V(M)\cup V(P)\cup V(W)$ is at most $\eps n/\log n$, from property \ref{itm: DFS-expansion}. Thus, if we let $Y=X\setminus V(P)=\{y_1,\ldots,y_h\}$, then we have $|Y|\leq 3\eps n/\log n$. Also, note that $|V(P)|\leq 2|V(P)\cap X|\leq 2|X|\leq 2\eps n$. This means that we covered all but at most $3\eps n/\log n$ vertices of $X$ by a path of length at most $2|X|$, which completes the step of economically covering most of $X$.

\medskip
\noindent
\textbf{Covering the rest of $X$ using Friedman-Pippenger embedding.} In this step, we extend the path $P$ to a path $Q$ which covers the set $Y$ and has length at most $12\eps n$. The first step will be to decide in which order we will cover the vertices of $Y$. More precisely, to each vertex $y_t\in Y$, we will associate an index $k_t\in \{\ell+2\log n, \ell+15\eps n\}$ such that $y_t$ will be the $k_t$-th vertex of $Q$. This index $k_t$ needs to be chosen in such a way that the pairs $(y_t, \chi(k_t-1))$ and $(y_t, \chi(k_t))$ are not in the poorly-expanding color-set $W$, and also such that no two values $k_s, k_t$ satisfy $|k_s-k_t|<2\log n$. However, since $|W|\leq \eps n/\log n$, this will be possible, as shown by the following claim.

\begin{claim}
There exists a set of indices $k_1, \dots, k_h\in \{\ell+2\log n, \ell+15\eps n\}$ with the following properties: for any $y_t\in Y$, the following vertex-color pairs do not belong to $W$, i.e. $(y_t, \chi(k_t-1)), (y_t, \chi(k_t))\notin W$, and $|k_s-k_t|\geq 2\log n$ for all $1\leq s <t\leq h$.
\end{claim}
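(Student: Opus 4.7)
My plan is to reduce the claim to a bipartite matching problem between $Y$ and a partition of the interval, with the spacing condition baked into the partition and the color condition handled via Hall's theorem, using the bound $|W|\leq \eps n/\log n$ crucially.

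Since $h\leq 3\eps n/\log n$ and the interval $I=\{\ell+2\log n,\ldots,\ell+15\eps n\}$ has size at least $15\eps n-2\log n$, I would first partition $I$ into $h$ consecutive sub-intervals $I_1,\ldots,I_h$, each of length at least $\lfloor |I|/h\rfloor\geq 4\log n$ for $n$ large enough. Inside each $I_j$ I designate a \emph{core} consisting of the middle indices at distance at least $\log n$ from both endpoints of $I_j$; the cores have length at least $2\log n$, and the cores of distinct sub-intervals are separated by at least $2\log n$. Consequently, any choice of indices $k_t$ one per core (with each sub-interval used at most once) automatically satisfies the spacing condition $|k_s-k_t|\geq 2\log n$ for $s\neq t$.

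Next I would set up a bipartite graph $B$ with parts $Y$ and $\{I_1,\ldots,I_h\}$, connecting $y_t$ to $I_j$ iff the core of $I_j$ contains some $k$ with $(y_t,\chi(k-1)),(y_t,\chi(k))\notin W$. A perfect matching in $B$ yields an assignment $\sigma:[h]\to[h]$ under which $y_t$ receives any witnessing good $k_t$ in the core of $I_{\sigma(t)}$, completing the claim. To produce the matching I would verify Hall's condition as follows: for each $y\in Y$ let $W_y=\{i:(y,i)\in W\}$, so that $\sum_{y\in Y}|W_y|\leq |W|\leq \eps n/\log n$, and observe that the set of indices $k\in I$ with $\chi(k-1)\in W_y$ or $\chi(k)\in W_y$ has size at most $2|\chi^{-1}(W_y)\cap(I\cup(I-1))|$. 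Summing these counts over $y$ in any subset $S\subseteq Y$ and comparing with the core lengths of the $I_j$ should bound the number of sub-intervals whose cores are bad for every $y\in S$ by at most $h-|S|$.

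The main obstacle is the Hall-condition verification, because a naive union bound over $y\in S$ is too weak: $\chi$ can be heavily concentrated on a few colors that happen to lie in $W_y$ for many $y\in S$ simultaneously, and then for such $y$ the set of bad indices may be almost all of $I$. The argument therefore has to exploit that the $h$ cores are spread across the whole interval and each has length $\geq 2\log n$, so that even a concentrated $\chi$ cannot destroy too many cores at once when $|W|\leq \eps n/\log n$. I expect this to follow by splitting $S$ according to the ``heavy'' colors appearing in each $W_y$ and, within each part, applying a sub-interval-by-sub-interval pigeonhole to show that the surplus of cores over demand from $S$ is enough to match everyone; a final small-$|S|$ base case can be handled directly from $|W_y|\leq |W|$ and the length of the cores.
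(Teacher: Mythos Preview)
The paper's proof of this claim is a direct greedy selection, with no matching or Hall's theorem at all: it chooses $k_1,\dots,k_h$ one at a time, and at step $t$ discards the at most $4(t-1)\log n\le 12\eps n$ indices within distance $2\log n$ of previously chosen $k_s$, together with the indices $x$ for which $(y_t,\chi(x-1))\in W$ or $(y_t,\chi(x))\in W$; the paper bounds the latter set by $2|W|\le 2\eps n/\log n$. Since the interval has at least $14\eps n$ indices, a valid $k_t$ remains. That is the whole argument.

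Your proposal takes a genuinely different and heavier route, but it leaves exactly the decisive step unproved, and the extra machinery does not buy you anything on that step. Verifying Hall's condition for your bipartite graph $B$ ultimately comes down to bounding, for a fixed $y_t$, how many cores consist entirely of indices that are $W$-bad for $y_t$. This is governed by the same per-vertex quantity the greedy argument uses: the number of $x$ in the interval with $\chi(x)\in W_{y_t}$ or $\chi(x-1)\in W_{y_t}$. If that quantity is small for each $y_t$, the greedy selection already succeeds and the matching detour is unnecessary. If it is not small---in particular in the scenario you yourself flag, where $\chi$ concentrates on a colour $c$ with $(y_t,c)\in W$---then $y_t$ has degree zero in $B$ and no perfect matching exists, so Hall's theorem cannot save you. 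Your suggested remedy of ``splitting $S$ by heavy colours and sub-interval pigeonhole'' does nothing for a vertex of degree zero; the obstacle is not a Hall-deficiency spread over $S$, it is a single isolated vertex.

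In short: the proposal replaces a two-line greedy argument by a partition/matching framework, identifies the crux correctly, and then does not resolve it. The resolution is the per-$y_t$ bound on $W$-bad indices that the paper invokes; once you have that, pick the $k_t$ greedily and skip the bipartite graph entirely.
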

\begin{proof}
We can find such indices $k_1, \dots, k_h$ greedily, one by one. Suppose we have already chosen the values for $k_1, \dots, k_{t-1}$. Let us mark all values $x\in \{\ell+2\log n, \ell+15\eps n\}$ which have $|x-k_s|< 2\log n$ for some $1\leq s<t$. In this way, at most $t\cdot 4\log n\leq 12\eps n$ values in this interval are marked. Furthermore, let us mark all values $x\in \{\ell+2\log n, \ell+15\eps n\}$ for which $(y_t, \chi(x-1))$ or $(y_t, \chi(x))$ belongs to $W$. This marks at most another $2\eps n/\log n\leq \eps n$ values. However, since the set of all possible values of $k_t$ has cardinality at least $15\eps n-2\log n\geq 14\eps n$, and we have marked at most $13\eps n$ values, there is an unmarked $k_t$ which we can choose. Doing this for $t=1, \dots, h$ completes the proof.
\claimproofend

Let us reindex the elements of $y$ so that the indices $k_t$ are increasing, and also define $y_0$ to be the last vertex of the path $P$, along with $k_0=\ell$. Now, the idea will be to join the vertices $y_{t-1}$ and $y_{t}$ for all $t$ by appropriately colored paths of length $k_{t}-k_{t-1}$, thus obtaining a single $\chi$-colored path. To do this, we define a sequence of forests $F_0\subseteq F_1\subseteq \dots\subseteq F_h$ as follows. The initial forest $F_0$ is a restriction of $F_{P, M}$ on the vertex set $\varphi_{P, M}^{-1}(V(P)\cup Y)$, and it comes with the $(m,D,A)$-good embedding $\varphi_0$ (which is defined as a restriction of $\varphi_{P, M}$ to $F_0$, and it is $(m,D,A)$-good by \cref{lem: backward}). Then, $F_t$ is inductively defined by attaching a directed path $P_{t-1}$ starting at the vertex $\varphi_0^{-1}(y_{t-1})$ of length $s=\log n-6$, such that the $i$-th edge of this path has color $\chi(k_{t-1}+i-1)$, and by also attaching a directed path $P_{t}'$ starting at the vertex $\varphi_0^{-1}(y_{t})$ of length $k_t-k_{t-1}-s-1$, such that the $i$-th edge of this path has color $\chi(k_{t}-i)$. This is illustrated in Figure~\ref{fig:forest F_t}. 

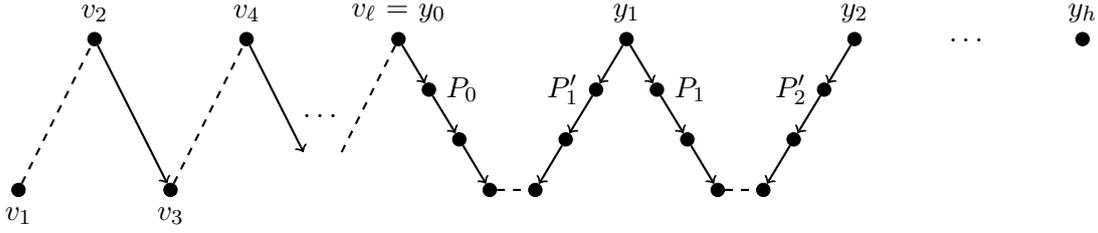
\begin{figure}
    \centering
    \begin{tikzpicture}
        \node[vertex, label=below:$v_1$] (v1) at (-4, -1) {};
        \node[vertex, label=above:$v_2$] (v2) at (-3, 1) {};
        \node[vertex, label=below:$v_3$] (v3) at (-2, -1) {};
        \node[vertex, label=above:$v_4$] (v4) at (-1, 1) {};
        \node[vertex, label=above:$v_\ell \text{ $=$ } y_0$] (vell) at (1, 1) {};
        \node[label=$\dots$] () at (0, -0.3) {};
        
        \draw[thick, dashed] (v1) -- (v2);
        \draw[thick, ->] (v2) -- (v3);
        \draw[thick, dashed] (v3) -- (v4);
        \draw[thick, ->] (v4) -- (-0.25, -0.5);
        \draw[thick, dashed] (vell) -- (0.25, -0.5);
        
        \node[vertex, label=above:$y_1$] (y1) at (4, 1) {};
        \node[vertex, label=above:$y_2$] (y2) at (7, 1) {};
        \node[label=$\dots$] () at (8.5, 0.7) {};
        \node[vertex, label=above:$y_h$] (yh) at (10, 1) {};

        \node[vertex, label=right:$P_0$] (p01) at (1.4, 0.33) {};
        \node[vertex] (p02) at (1.8, -0.33) {};
        \node[vertex] (p03) at (2.2, -1) {};

        \node[vertex, label=left:$P_1'$] (p1'1) at (3.6, 0.33) {};
        \node[vertex] (p1'2) at (3.2, -0.33) {};
        \node[vertex] (p1'3) at (2.8, -1) {};

        \draw[thick, ->] (vell) -- (p01);
        \draw[thick, ->] (p01) -- (p02);
        \draw[thick, ->] (p02) -- (p03); 
        \draw[thick, ->] (y1) -- (p1'1);
        \draw[thick, ->] (p1'1) -- (p1'2);
        \draw[thick, ->] (p1'2) -- (p1'3);
        \draw[thick, dashed] (p03) -- (p1'3);

        \node[vertex, label=right:$P_1$] (p11) at (4.4, 0.33) {};
        \node[vertex] (p12) at (4.8, -0.33) {};
        \node[vertex] (p13) at (5.2, -1) {};

        \node[vertex, label=left:$P_2'$] (p2'1) at (6.6, 0.33) {};
        \node[vertex] (p2'2) at (6.2, -0.33) {};
        \node[vertex] (p2'3) at (5.8, -1) {};

        \draw[thick, ->] (y1) -- (p11);
        \draw[thick, ->] (p11) -- (p12);
        \draw[thick, ->] (p12) -- (p13); 
        \draw[thick, ->] (y2) -- (p2'1);
        \draw[thick, ->] (p2'1) -- (p2'2);
        \draw[thick, ->] (p2'2) -- (p2'3);
        \draw[thick, dashed] (p13) -- (p2'3);
    \end{tikzpicture}
    \caption{Illustration of the construction of the forest $F_t$.}
    \label{fig:forest F_t}
\end{figure}

\begin{claim}\label{claim:embedding F_t}
For each $1\leq t\leq h$, there is an $(A,m,D)$-good embedding $\varphi_t$ of $F_t$ extending $\varphi_{t-1}$, with the property that the endpoints of $P_{t-1}$ and $P'_t$ are two vertices connected by an edge in $G'_{\chi(k_{t-1}+\log n)}$
\end{claim}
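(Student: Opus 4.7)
The plan is to grow two bushy trees $T_1,T_2$ rooted at the images of $y_{t-1}$ and $y_t$, invoke property~\ref{itm:joined graph} to find a joining edge between their leaves, pick the desired paths inside the trees, and then use \cref{lem: backward} to strip away the auxiliary branches. Concretely, I will take $T_1$ to have depth $s=\log n-6$ matching the length of $P_{t-1}$: a ``stem'' of length $s-r$ followed by a $\lfloor D/2\rfloor$-ary subtree of depth $r$, where $r=O(\log n/\log\log n)$ is chosen so that $\lfloor D/2\rfloor^r\geq n/2^7$. The edge at depth $i$ of $T_1$ is assigned color $\chi(k_{t-1}+i-1)$, so every root-to-leaf path in $T_1$ is a valid candidate embedding of $P_{t-1}$. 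I define $T_2$ analogously, of depth $k_t-k_{t-1}-s-1$ with edges at depth $i$ colored $\chi(k_t-i)$. Since $|F_{t-1}|+|T_1|+|T_2|$ stays comfortably below the Friedman--Pippenger budget $mD\approx n/20$, the machinery is applicable throughout.

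I would then extend $\varphi_{t-1}$ to an $(m,D,A)$-good embedding of $F_{t-1}\cup T_1\cup T_2$ by iterated application of \cref{lem: forward}. Each leaf is added to a parent $v$ whose image already lies in $A$ for the color of the new edge; such parents exist at each step because the blocked set $W$ has size at most $\eps n/\log n$, which is small compared to the candidate set supplied by the $(2m,2D+1,A)$-color expander property of $(G'_1,\dots,G'_n)$.

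Once $T_1$ and $T_2$ are embedded, their leaf sets $L_1,L_2$ are disjoint subsets of $V'$ of size at least $n/2^7$, so property~\ref{itm:joined graph} applied in color $c=\chi(k_{t-1}+\log n)$ produces an edge $uv\in E(G'_c)$ with $u\in L_1$ and $v\in L_2$. Let $P_{t-1}$ be the unique root-to-$u$ path in $T_1$ and $P_t'$ the unique root-to-$v$ path in $T_2$; these have the correct lengths and color sequences by construction, and their endpoints are joined by the required edge $uv$. Finally, applying \cref{lem: backward} iteratively to each leaf of $T_1\cup T_2$ not lying on one of these two paths gives the required $(m,D,A)$-good embedding $\varphi_t$ of $F_t$.

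The main obstacle is managing the tree growth through the shallow levels, where the frontier is smaller than $|W|$ and could in principle be entirely blocked for the next color. I plan to handle this by branching maximally (using the full degree budget $D$ at each vertex rather than just $\lfloor D/2\rfloor$) until the frontier exceeds $|W|$, which happens within $O(\log n/\log\log n)$ levels; once that threshold is passed, the standard Friedman--Pippenger expansion takes over and reaches the target leaf count well within the remaining depth.
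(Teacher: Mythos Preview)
Your overall strategy matches the paper's: grow edge-colored trees from $y_{t-1}$ and from (near) $y_t$, invoke property~\ref{itm:joined graph} to find a connecting edge between their leaf sets in the required color, and then use \cref{lem: backward} to strip away the unused branches. The paper carries this out more simply than you do: after first embedding the initial segment of $P_t'$ as a bare path, it attaches \emph{binary} trees of depth $s=\log n-6$ at $y_{t-1}$ and at the far end of that segment; since $2^s=n/2^6\ge n/2^7$, property~\ref{itm:joined graph} applies directly to the two leaf sets. Your stem-plus-$\lfloor D/2\rfloor$-ary construction reaches the same leaf count but is needlessly elaborate compared to the binary tree.

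Your concern about the shallow frontier being ``blocked'' by $W$ reflects a genuine subtlety, but your proposed fix neither works nor is needed. It does not work because even with maximal branching the frontier at depth one has only about $D\approx\log n$ vertices, still far smaller than $|W|\le\eps n/\log n$, so the scenario you fear is not ruled out; and branching from the root also contradicts your own stem-first description of $T_1$. It is not needed because one can simply keep the isolated vertices mapped to $V(W)\setminus X$ inside the forest $F_0$ (at a cost of at most $\eps n/\log n$ extra vertices, which is negligible for the $mD$ budget). With $X\cup V(W)$ already occupied, every newly embedded vertex is forced into $L'=L\setminus(X\cup V(W))$, and any $v\in L'$ satisfies $(v,i)\in A$ for \emph{every} color $i$; hence the hypothesis of \cref{lem: forward} holds at every step of the tree growth without exception, and no special handling of shallow levels is required.
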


Proving this claim is sufficient to finish the proof. Namely, the forest $F_h$ is a union of paths covering $X$, and it is ensured that in the embedding $\varphi_h$, these paths can be joined into a $\chi$-colored path covering the whole of $X$. Furthermore, $v_1\in L'\setminus X'$ and by a simple application of Lemma~\ref{lem: forward}, we get that we can ensure that also the other endpoint is in $L'\setminus X'$. So, let us justify the claim then.

\begin{proof}[Proof of Claim~\ref{claim:embedding F_t}.]
Let us use Lemma~\ref{lem: forward} to embed all but the last $s=\log n-6$ vertices of the path $P_t'$ into $L'$ extending the embedding $\varphi_{t-1}$, thus arriving at a good embedding $\varphi'$ (ensuring that the edges are appropriately colored, of course). Now, we consider a forest $F_t'$, in which binary trees of depth $s$ directed away from the root are attached to $y_{t-1}$ and the last embedded vertices of $P_t'$, where all edges of the same depth receive the same color, so that any path from $y_{t-1}$ to any leaf has the same coloring as $P_{t-1}$ (and similar for $y_t$). This is illustrated in Figure~\ref{fig:binary trees}.
\begin{figure}[hbpt]
    \centering
    \begin{tikzpicture}[scale=0.7]
        \node[vertex, label=left:$y_{t-1}$] (v) at (-4, 0) {};

        \node[vertex] (v0) at (-3, -1) {};
        \node[vertex] (v1) at (-3, 1) {};
        \node[vertex] (v00) at (-2, -1.5) {};
        \node[vertex] (v01) at (-2, -0.5) {};
        \node[vertex] (v10) at (-2, 0.5) {};
        \node[vertex] (v11) at (-2, 1.5) {};
        \node[vertex] (v000) at (-1, -1.75) {};
        \node[vertex] (v001) at (-1, -1.25) {};
        \node[vertex] (v010) at (-1, -0.75) {};
        \node[vertex] (v011) at (-1, -0.25) {};
        \node[vertex] (v100) at (-1, 0.25) {};
        \node[vertex] (v101) at (-1, 0.75) {};
        \node[vertex] (v110) at (-1, 1.25) {};
        \node[vertex] (v111) at (-1, 1.75) {};

        \draw[very thick, brown] (v00) -- (v000);
        \draw[very thick, brown] (v00) -- (v001);
        \draw[very thick, brown] (v01) -- (v010);
        \draw[very thick, brown] (v01) -- (v011);
        \draw[very thick, brown] (v10) -- (v100);
        \draw[very thick, brown] (v10) -- (v101);
        \draw[very thick, brown] (v11) -- (v110);
        \draw[very thick, brown] (v11) -- (v111);
        
        \draw[very thick, Aquamarine] (v0) -- (v00);
        \draw[very thick, Aquamarine] (v0) -- (v01);
        \draw[very thick, Aquamarine] (v1) -- (v10);
        \draw[very thick, Aquamarine] (v1) -- (v11);

        \draw[very thick, ForestGreen] (v) -- (v0);
        \draw[very thick, ForestGreen] (v) -- (v1);
        
        \node[vertex] (u) at (3, 0) {};
        \node[vertex] (u0) at (2, -1) {};
        \node[vertex] (u1) at (2, 1) {};
        \node[vertex] (u00) at (1, -1.5) {};
        \node[vertex] (u01) at (1, -0.5) {};
        \node[vertex] (u10) at (1, 0.5) {};
        \node[vertex] (u11) at (1, 1.5) {};
        \node[vertex] (u000) at (0, -1.75) {};
        \node[vertex] (u001) at (0, -1.25) {};
        \node[vertex] (u010) at (0, -0.75) {};
        \node[vertex] (u011) at (0, -0.25) {};
        \node[vertex] (u100) at (0, 0.25) {};
        \node[vertex] (u101) at (0, 0.75) {};
        \node[vertex] (u110) at (0, 1.25) {};
        \node[vertex] (u111) at (0, 1.75) {};

        \draw[very thick, Goldenrod] (u00) -- (u000);
        \draw[very thick, Goldenrod] (u00) -- (u001);
        \draw[very thick, Goldenrod] (u01) -- (u010);
        \draw[very thick, Goldenrod] (u01) -- (u011);
        \draw[very thick, Goldenrod] (u10) -- (u100);
        \draw[very thick, Goldenrod] (u10) -- (u101);
        \draw[very thick, Goldenrod] (u11) -- (u110);
        \draw[very thick, Goldenrod] (u11) -- (u111);
    
        \draw[very thick, ForestGreen] (u0) -- (u00);
        \draw[very thick, ForestGreen] (u0) -- (u01);
        \draw[very thick, ForestGreen] (u1) -- (u10);
        \draw[very thick, ForestGreen] (u1) -- (u11);

        \draw[very thick, DarkOrchid] (u) -- (u0);
        \draw[very thick, DarkOrchid] (u) -- (u1);

        \draw[very thick, dashed, Lavender] (v010) -- (u110);

        \node[vertex, label=right:$y_{t}$] (u''') at (6, 0) {};
        \node[vertex] (u'') at (5, 0) {};
        \node[vertex, label=right:$\!\cdots$] (u') at (4, 0) {};

        \draw[very thick, RedOrange] (u''') -- (u'');
        \draw[very thick, Goldenrod] (u') -- (u);

        \draw [
            thick,
            decoration={
                brace,
                mirror,
                raise=0.5cm
            },
            decorate
        ] (u) -- (u''') 
        node [pos=0.5,anchor=north,yshift=-0.55cm] {\footnotesize ${k_t-k_{t-1}-2s-1}$}; 
    \end{tikzpicture}
    \caption{Illustration of the proof of Claim~\ref{claim:embedding F_t}.}
    \label{fig:binary trees}
\end{figure}
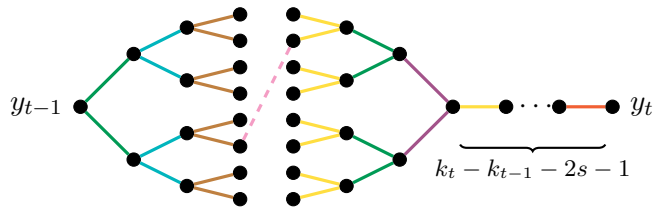

Again, by Lemma~\ref{lem: forward}, we can embed the forest $F_t'$ arriving at an $(m,D,A)$-good embedding $\varphi''$ which extends $\varphi'$. Observe that Lemma~\ref{lem: forward} applies since $F_t'$ has at most $2\cdot n/2^6+17\eps n\leq n/20\leq mD$ vertices (which is actually the inequality which constrains $\eps$ and requires, say, $\eps\leq 10^{-4}$).

Here, the key ingredient is the property \ref{itm:joined graph} - there is an edge of color $\chi(k_{t-1}+\log n)$ between two leaves from the two constructed trees. Having fixed these two leaves, we can delete all vertices other than their ancestors from the embedding using Lemma~\ref{lem: backward}, thus arriving at required embedding of the forest $F_t$. \claimproofend\qedhere
\end{proof}

\vspace{-0.7cm}
\section{Concluding remarks}

In this paper, we determined the threshold probability $p$ for an $n$-tuple of independent random graphs $(G_1, \dots, G_n)$ sampled from $G(n, p)$ to be Hamilton-universal. As we explained in the introduction, this result is a variant of a theorem by Bowtell, Morris, Pehova and Staden, who showed that any $n$-tuple of graphs $(G_1, \dots, G_n)$ with $\delta(G_i)\geq (1/2+o(1))n$ is Hamilton-universal. The fact that the $G_i$ are sampled independently in the random setting could lead to some intriguing differences.

For example, since the classical Dirac's condition for the containment of Hamilton cycle ($\delta(G)\geq n/2$) is tight, we must require at least the same condition from all graphs of the $n$-tuple in the deterministic setting. The reason for this is simple - if $G$ does not contain a Hamilton cycle, then $(G, \dots, G)$ does not contain a transversal Hamilton cycle either. Hence, in the transversal setting, the conditions for containment of a given structure cannot be weaker than the corresponding condition in the classical setting.

Once randomness is introduced, this behavior changes dramatically. For example, Anastos and Chakraborti \cite{AC23} showed that if $(G_1,\dots,G_n)$ is an $n$-tuple of independent graphs, where $G_i\sim G(n,p)$ with $p\gg \log n/n^2$, then $(G_1, \dots, G_n)$ contains a transversal Hamilton cycle with high probability. This differs starkly from the threshold probability for Hamiltonicity in $G(n,p)$ at $p=(1+o(1))\log n/n$. Therefore, the $n$-tuple $(G_1, \dots, G_n)$ contains a transversal Hamilton cycle even when no $G_i$ is expected to be Hamiltonian. This shows that looking for transversal structures in the random setting, as opposed to the deterministic setting, can be interesting even when none of the graphs $G_i$ have the required subgraph themselves. 

We wonder if we can observe a similar effect in questions regarding universality as well. Though Theorem~\ref{thm:main} is tight in the stated version, we can not rule out that it can be improved when only considering rainbow Hamilton cycles. More precisely, we say that a tuple $(G_1,\dots,G_n)$ of graphs on the same vertex set of size $n$ is \textit{rainbow Hamilton-universal} if it contains a $\chi$-colored Hamilton cycle for every bijection $\chi:[n]\to[n]$. Note the subtle difference to Hamilton-universality, where $\chi$ is not required to be bijective.
\begin{question}
What is the threshold probability $p$ such that the following holds with high probability? If $(G_1, \ldots, G_n)$ is an $n$-tuple of independent random graphs on the same vertex set sampled from $G(n,p)$, then $(G_1, \ldots, G_n)$ is rainbow Hamilton-universal.
\end{question}

The best lower bound we can prove is of the form $p \geq c/n$, for which we have two different arguments. Firstly, if $c<e$, then the expected number of copies of a Hamilton cycle with a specific rainbow color-pattern is $n!p^n=(1+o(1)) \sqrt{2\pi n}\big(\frac{n}{e}\big)^n\big(\frac{c}{n}\big)^n=o(1)$. Secondly, if $c<\log(2)$ then with high probability there exists a vertex $v$ which is isolated in more than half of the $G_i$. In this case, there exists a bijection $\chi:[n]\to[n]$ such that $v$ is isolated in $G_{\chi(i)}$ or $G_{\chi(i+1)}$ for all $i\in [n]$ and therefore, $(G_1,\dots,G_n)$ does not contain a $\chi$-colored Hamilton cycle. Curiously, the first lower bound has a better constant than the second, so that the expectation gives a better bound than the threshold for a specific vertex being bad. This is different from the usual behavior of Hamilton cycles in random graphs, where we can expect to find a Hamilton cycle as soon as every vertex has degree at least $2$. 

It would also be interesting to develop a universal analog to the results of \cite{AC23}. 
\begin{problem}
    Show that there exists $C$ such that for $p\geq C\log n/n$ the following holds with high probability. Let $(G_1, \ldots, G_n)$ be a tuple of graphs on the same vertex set of size $n$ such that $\delta(G_i)\geq (1/2+o(1))n$. For every $i$, let $F_i\subseteq G_i$ be obtained by keeping every edge independently with probability $p$. Then, $(F_1, \ldots, F_n)$ is Hamilton-universal.
\end{problem}

\noindent{\bf Acknowledgments.} We thank Katherine Staden for helpful comments on the manuscript. We also thank Ben Lund and Chuandong Xu for pointing out the similarity with \cite{lund2025embedding}.


\begin{thebibliography}{10}

\bibitem{A20}
R.~Aharoni, M.~DeVos, S.~G. Hermosillo de~la Maza, A.~Montejano, and R.~{\v S}{\' a}mal.
\newblock A rainbow version of {Mantel}\textquoteright{}s {Theorem}.
\newblock {\em Advances in Combinatorics}, feb 28 2020.

\bibitem{AKS85}
M.~Ajtai, J.~Koml{\'o}s, and E.~Szemer{\'e}di.
\newblock First occurrence of Hamilton cycles in random graphs.
\newblock {\em North-Holland Mathematics Studies}, 115(C):173--178, 1985.

\bibitem{AC23}
M.~Anastos and D.~Chakraborti.
\newblock Robust Hamiltonicity in families of Dirac graphs.
\newblock \textit{Preprint}, arXiv:2309.12607.

\bibitem{B84}
B.~Bollob{\'a}s.
\newblock The evolution of sparse graphs.
\newblock {\em Graph theory and combinatorics (Cambridge, 1983)}, pages 35--57, 1984.

\bibitem{BP24}
C.~Bowtell, P.~Morris, Y.~Pehova, and K.~Staden.
\newblock Universality for transversal Hamilton cycles.
\newblock {\em Bulletin of the London Mathematical Society}, 2024.

\bibitem{CFH23}
D. Chakraborti, A. Frieze, and M. Hasabnis.
\newblock Colorful Hamilton Cycles in Random Graphs.
\newblock {\em SIAM Journal on Discrete Mathematics}, 37(1):51--64, 2023.

\bibitem{cooper2002multi}
C. Cooper and A. Frieze. 
\newblock Multi-coloured Hamilton cycles in random edge-coloured graphs.
\newblock{\em Combinatorics, Probability and Computing}, 11(2):129--133, 2002.

\bibitem{D52}
G.~A. Dirac.
\newblock Some theorems on abstract graphs.
\newblock {\em Proceedings of the London Mathematical Society}, 3(1):69--81, 1952.

\bibitem{draganic2022rolling}
N. Dragani{\'c}, M. Krivelevich, and R. Nenadov.
\newblock Rolling backwards can move you forward: on embedding problems in sparse expanders.
\newblock {\em Transactions of the American Mathematical Society}, 375(7):5195--5216, 2022.

\bibitem{draganic2024hamiltonicity}
N.~Dragani{\'c}, R.~Montgomery, D.~M. Correia, A.~Pokrovskiy, and B.~Sudakov.
\newblock Hamiltonicity of expanders: optimal bounds and applications.
\newblock \textit{Preprint}, arXiv:2402.06603.

\bibitem{ER60}
P.~Erd\H{o}s and A.~R{\'e}nyi.
\newblock On the evolution of random graphs.
\newblock {\em Publ. Math. Inst. Hungar. Acad. Sci}, 5:17--61, 1960.

\bibitem{Fe15}
A.~Ferber.
\newblock Closing gaps in problems related to Hamilton cycles in random graphs and hypergraphs.
\newblock {\em The Electronic Journal of Combinatorics}, pages P1--61, 2015.

\bibitem{ferber2022dirac}
A.~Ferber, J.~Han, and D.~Mao.
\newblock Dirac-type problem of rainbow matchings and Hamilton cycles in random graphs.
\newblock \textit{Preprint}, arXiv:2211.05477.

\bibitem{ferber2016rainbow}
A. Ferber and M. Krivelevich.
\newblock Rainbow Hamilton cycles in random graphs and hypergraphs.
\newblock {\em Recent trends in combinatorics}, 167--189, 2016.

\bibitem{FL19}
A.~Ferber and E.~Long.
\newblock Packing and counting arbitrary Hamilton cycles in random digraphs.
\newblock {\em Random Structures \& Algorithms}, 54(3):499--514, 2019.

\bibitem{FP87}
J.~Friedman and N.~Pippenger.
\newblock Expanding graphs contain all small trees.
\newblock {\em Combinatorica}, 7:71--76, 1987.

\bibitem{friezesurvey}
A. Frieze.
\newblock Hamilton Cycles in Random Graphs: a bibliography.
\newblock \textit{Preprint}, arXiv:1901.07139.

\bibitem{frieze2014rainbow}
A. Frieze and P.-L. Loh.
\newblock Rainbow Hamilton cycles in random graphs.
\newblock {\em Random Structures \& Algorithms}, 44(3):328--354, 2014.

\bibitem{frieze2025sequentially}
A. Frieze and W. Pegden.
\newblock Sequentially constrained Hamilton cycles in random graphs.
\newblock {\em Random Structures \& Algorithms}, 67(1):e70027, 2025.

\bibitem{glebov2013hamilton}
R. Glebov.
\newblock On Hamilton cycles and other spanning structures.
\newblock {\em PhD thesis, available online}, 2013.

\bibitem{heath2025universality}
E.~Heath, J.~Hyde, N.~Morrison, and S.~Ogden.
\newblock Universality for transversal powers of Hamilton cycles.
\newblock \textit{Preprint}, arXiv:2510.18163.

\bibitem{JK20}
F.~Joos and J.~Kim.
\newblock On a rainbow version of Dirac's theorem.
\newblock {\em Bulletin of the London Mathematical Society}, 52(3):498--504, 2020.

\bibitem{KS83}
J.~Komlós and E.~Szemerédi.
\newblock Limit distribution for the existence of Hamiltonian cycles in a random graph.
\newblock {\em Discrete Mathematics}, 43(1):55--63, 1983.

\bibitem{lund2025embedding}
B. Lund and C. Xu.
\newblock Embedding edge-colored graphs in expanders with roll-back.
\newblock \textit{Preprint}, arXiv:2501.14286.

\bibitem{MD80}
C.~McDiarmid.
\newblock Clutter percolation and random graphs.
\newblock {\em Math. Programming Stud.}, (13):17--25, 1980.

\bibitem{Mo24}
R.~Montgomery.
\newblock Spanning cycles in random directed graphs.
\newblock {\em Random Structures \& Algorithms}, 65(3):535--575, 2024.

\bibitem{montgomery2019spanning}
R. Montgomery.
\newblock Spanning trees in random graphs.
\newblock {\em Advances in Mathematics}, 356:106793, 2019.

\bibitem{Alp21}
R.~Montgomery, A.~M{\"u}yesser, and Y.~Pehova.
\newblock Transversal factors and spanning trees.
\newblock \textit{Advances in Combinatorics,} 2022.

\bibitem{PO76}
L.~P{\'o}sa.
\newblock Hamiltonian circuits in random graphs.
\newblock {\em Discrete Mathematics}, 14(4):359--364, 1976.

\bibitem{Survey24}
W.~Sun, G.~Wang, and L.~Wei.
\newblock Transversal structures in graph systems: A survey.
\newblock \textit{Preprint}, arXiv:2412.01121.

\end{thebibliography}
\end{document}